\documentclass{article}
\usepackage{graphicx} 
\usepackage[utf8]{inputenc}
\usepackage[T1]{fontenc}
\usepackage{amsthm,amsmath,amssymb,xcolor}
\usepackage{tikz,tkz-berge}
\usepackage{multicol}
\usepackage{enumerate}
\usetikzlibrary{arrows}
\usepackage{hyperref}
\usepackage{tabularx}
\usepackage{comment}
\usepackage[export]{adjustbox}
\usepackage{appendix}
\usepackage[normalem]{ulem}
\usepackage{authblk}
\usepackage{todonotes}	
\usepackage{comment}

\newtheorem{thm}{Theorem}

\newtheorem{cor}[thm]{Corollary}

\newtheorem{lemma}[thm]{Lemma}
\newtheorem{prop}[thm]{Proposition}
\theoremstyle{definition}

\newtheorem*{defn}{Definition}
\newtheorem{ex}{Example}
\newtheorem*{obs}{Observation}

\renewcommand\det{\operatorname{Det}}
\newcommand\dist{\operatorname{Dist}}
\newcommand\aut{\operatorname{Aut}}
\newcommand\stab{\operatorname{Stab}}

\usepackage{authblk}

\title{Twins and Co-Twins in Circulant Graphs}

\author{Sally Cockburn\thanks{Corresponding author: scockbur@hamilton.edu}, \enspace Ryhory Hatavets{\thanks{This research was supported by The Monica Odening '05 Student Internship and Research Fund in Mathematics.}} \enspace and Will Swartz}
\affil{Mathematics and Statistics Department\\ Hamilton College\\ Clinton, NY 13323, USA}

\date{\today}

\AtEndDocument{\bigskip{\footnotesize
 \medskip
\noindent   \textit{E-mail address}, (Sally Cockburn) \texttt{scockbur@hamilton.edu} 
  \addvspace{\medskipamount}
  
\noindent  \textit{E-mail address}, (Ryhory Hatavets) \texttt{Grishahatavets@gmail.com}

\noindent  \textit{E-mail address}, (Will Swartz) \texttt{willsadieswartz@gmail.com}
}}

\date{\today}

\begin{document}

\maketitle

\begin{abstract}    
    Circulant graphs are a widely studied family of graphs whose members  possess varying amounts of symmetry. Although considerable progress has been made in finding the automorphism groups of circulant graphs under certain restrictions, a complete classification is elusive.
    In general, the structure of the automorphism group of a graph with twins can be simplified by separating the effect of automorphisms that permute mutually twin vertices and those that operate on the twin quotient graph. 
    Further simplification can be achieved in twin-free, vertex-transitive graphs that have co-twins, which we
    define to be vertices whose neighborhoods are complementary. 
    In this paper, we demonstrate how the these simplifications can provide insight on, and in some cases completely determine, the automorphism groups and symmetry parameters of vertex-transitive graphs in general and circulant graphs in particular.      
\end{abstract}

{\bf Keywords}: circulant graph; automorphism group; symmetry parameters

{\bf Subject Classification:} 05C25, 05C69

 \section{Introduction}

 Let $n \in \mathbb N$ and let $A\subseteq   \mathbb Z_n \setminus \{0\}$ that is {\it inverse-closed} in the sense that $A = - A$. The circulant graph $C_n(A)$ has vertex set $\mathbb Z_n$, with $u,v \in \mathbb Z_n$ being adjacent iff $u-v \in A$. The elements of $A$ are called the generators of $C_n(A)$.  
 Each vertex in $C_n(A)$ has degree $|A|$; equivalently $C_n(A)$ is a $|A|$-valent (regular) graph.
 With their vertices drawn evenly spaced around a circle, circulant graphs certainly have what we would intuitively call a symmetric appearance; see the three examples in Figure~\ref{fig:TwinsCirculants}. 
 More technically, the symmetries of a graph $G = (V,E)$ are the bijections of $V$ that preserve adjacency and non-adjacency. Symmetries are also called graph automorphisms, and they form a group $\aut(G)$ under composition. The structure of $\aut(G)$ provides different ways of measuring of just how symmetric $G$ is.

 Any circulant graph $C_n(A)$ is vertex-transitive, because for any $u, v \in \mathbb Z_n$, $\alpha(w) = w-u+v$ is a graph automorphism satisfying $\alpha(u) = v$. However, not all circulant graphs are edge-transitive.
Note that $\tau_{-1}(a) = -a$ is  automorphism of $C_n(A)$, which implies that a circulant graph is edge-transitive iff it is arc-transitive.  The classification of all arc-transitive circulant graphs was found independently by Kovacs~\cite{K2004} and Li~\cite{L2005}. In ~\cite{PW2020}, Poto\u{c}nik and Wilson provide the implications of this classification for 4-valent circulant graphs; $3$-valent arc-transitive circulant graphs are given in ~\cite{GKLV2017}.

Another way to characterize the symmetry of a graph $G$ is to quantify how easy it is to block nontrivial automorphisms. For example, 
we could add the requirement that graph automorphisms fix each vertex in a set $D \subseteq V$; if the only automorphism that does so is the identity, then $D$ is called a determining (or fixing) set.
 The size of a smallest determining set, $\det(G)$, is called the determining number of $G$.  
 As another example, we could assign any one of $d$ colors to each vertex of $G$ and add the requirement that graph automorphisms must preserve (setwise) the color classes. If the only automorphism to do so is the identity, then the coloring is called $d$-distinguishing. The distinguishing number of a graph, $\dist(G)$, is the minimum number of colors required in a distinguishing coloring. In \cite{CL2024}, Cockburn and Loeb give the determining and distinguishing number of all $3$-valent and $4$-valent circulant graphs by first determining  their automorphism groups.

However, finding the automorphism group of circulant graphs in general is difficult. In 2007, Morris~\cite{M2007} provided an excellent survey of partial results, including when $C_n(A)$ is arc-transitive,  or when  $n$ is prime, a prime power, or square-free, and/or the elements of $A$ are all divisors of $n$. More recent results cover circulant graphs whose adjacency matrix has all integer eigenvalues or all rational eigenvalues; see \cite{BI2011} and \cite{KK2012} respectively. 

In this paper, we extend an approach used in \cite{CL2024} to investigate the automorphism group. Vertices 
are nonadajcent (respectively adjacent) twins if they have the same open (respectively closed) neighborhood.
In vertex-transitive graphs, the presence of twin vertices completely decides the determining number and reduces the task of computing the distinguishing number to computing that of its twin quotient graph.
In addition, applying a a classic theorem of Sabiduissi~\cite{S1964}, we can separate the effect of automorphisms that permute mutually twin vertices from those that those permute twin classes. 
In a few cases, iteratively looking at twin vertices in quotient graphs completely determines the automorphism group. In all cases, we can reduce the problem of finding automorphism groups of vertex-transitive graphs to finding automorphism groups of those that are twin-free.

Vertex-transitive graphs that are twin-free can still have co-twin vertices, defined in \cite{CL2024} to be vertices whose neighborhoods are complementary rather than equal. 
In such graphs, distinct sets of co-twin vertices cannot be independently permuted, and so their presence imposes strong constraints on the automorphism group.
If a vertex-transitive, twin-free graph with nonadjacent co-twins is also triangle-free, it must be a crown graph. If it has triangles, then its automorphism group, determining number and distinguishing number are related to those of the subgraph induced by the open neighborhood of any vertex.

The organization of the paper is as follows.
Section~\ref{sec:Circulants} reviews some basic facts about circulant graphs. 
In Section~\ref{sec:Twins}, we provide general results on the symmetry parameters and automorphism group of a vertex-transitive graph in terms of those of its twin quotient graph.
We apply these results to Cayley graphs in general and circulant graphs in particular in Section~\ref{sec:TwinsinCirculants}. As an application, we give a characterization of all connected, 5-valent and 6-valent circulant graphs with twins. 
In Section~\ref{sec:CoTwins}, we formally introduce co-twins and provide results on graphs that are vertex-transitive, twin-free and have co-twins.

\section{Circulant Graphs}\label{sec:Circulants} 

If $G$ is a group with inverse-closed subset $A \subseteq G \setminus \{e_G\}$, the corresponding Cayley graph $X(G,A)$ has vertex set $G$ with $g,h\in G$ being adjacent iff $gh^{-1} \in A$. For each $g \in G$, the function $\alpha_g(x) = xg$ is a graph automorphism of $X(G,A)$. For any $g, h \in G$, $\alpha_{hg{-1}}(g) = h$, meaning that $X(G,A)$ is vertex-transitive.  Moreover, for $g \neq e_G$, $\alpha_g$ has no fixed points. Thus $\aut(X(G,A))$ contains $G$ as a subgroup that acts regularly on the vertices; see \cite{GR2001}.
Conversely, in \cite{S1964}, Sabidussi  showed that if $X$ is a connected vertex-transitive graph and $G$ is a subgroup of $\aut(X)$ that acts transitively on $V(X)$, then there exists inverse-closed $A \subseteq G\setminus\{e_G\}$ such that  $X$ is the homomorphic image of $X(G,A)$. Additionally, he showed that the automorphim group of any vertex-transitive of order greater than $2$ is nonabelian. 

The circulant graph $C_n(A)$ is a Cayley graph with $G = \mathbb Z_n$. Using additive notation, $u,v \in \mathbb Z_n$ are adjacent iff  $u-v \in A$. It is immediate that the neighborhood of $0$ is $N(0) = A$; more generally, $N(u) = u+A$ for all $u \in \mathbb Z_n$. 
 Two extreme cases are $C_n(\mathbb Z_n  \setminus \{0\}) = K_n$ and $C_n(\emptyset) = N_n$; at the intersection of these two, there is the trivial circulant graph $C_1(\emptyset) = K_1.$  For $n = 2m$,  $C_n(A) = K_{mm}$ iff $A = \{1, 3, 5, \dots, n-1\}$.


The complement of $C_n(A)$ is $C_n(\overline A)$, where $\overline A$ is the complement of $A$ in $\mathbb Z_n \setminus \{0\}$. 
As shown in~\cite{Brooks2021}, $C_n(A)$ is connected iff $\gcd(n,A) = 1$; more generally, $C_n(A)$ has $\gcd(n,A)$ components. From \cite{H2003}, a connected circulant graph $C_n(A)$ is bipartite iff  $n$ is even and every $a \in A$ is odd.

For any unit $t \in U(n)$, it is easy to show that $C_n(A) \cong C_n(t \cdot A)$. In \cite{A1981}, \'Ad\'am conjectured that any isomorphism between two circulant graphs must be of this form, but this was found to be false in general. Considerable work has been done to find restrictions under which \'Ad\'am's conjecture holds; see \cite{M1997}. 



 If $u \in U(n)$ satisfies $u\cdot A = A$, then $\beta_u: \mathbb Z_n \to \mathbb Z_n$ given by $\beta_u(v)= uv$ is an automorphism of $C_n(A)$. We denote the set of all such automorphisms $\stab_n(A)$; it is easy to check that is it a subgroup of $\aut(C_n(A))$. 
As a special case of a result proved by Godsil  in \cite{G1981}, if $\mathbb Z_n$ is a normal subgroup of $\aut(C_n(A))$, then $\aut(C_n(A)) = \mathbb Z_n \rtimes \stab_n(A)$.

  \section{Twin Vertices}\label{sec:Twins}

  \begin{defn}
    The {\it open neighborhood} of a vertex $u$ in the graph $G = (V,E)$ is $N(u) = \{v \in V: uv \in E\}$ and the {\it closed neighborhood} of $u$ is $N[u] = N(u) \cup \{u\}$. Distinct vertices $u,v$ are
{\it nonadjacent twins} iff $N(u) = N(v)$
and  {\it adjacent twins} iff $N[u] = N[v]$.
    \end{defn}

    Nonadjacent twins are sometimes called false twins, with adjacent twins being true twins.

 \begin{lemma}\label{lem:adjNonadjTwins}
 \begin{enumerate}[(a)]
 \item A vertex cannot have both a nonadjacent and an adjacent twin.
\item Two vertices in a graph $G$ are nonadjacent twins in $G$ iff they are adjacent twins in the complement $\overline G$.
\end{enumerate}
\end{lemma}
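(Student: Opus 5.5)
For part (a), I will argue by contradiction. Suppose $u$ has a nonadjacent twin $v$, so $N(u)=N(v)$ with $v\notin N(u)$, and also an adjacent twin $w$, so $N[u]=N[w]$ with $w\in N(u)$. Since $w\in N(u)=N(v)$, the vertices $v$ and $w$ are adjacent, so $v\in N(w)\subseteq N[w]=N[u]$. But $v\neq u$ (twins are distinct by definition), so $v\in N(u)$. This contradicts $v\notin N(u)$. The only point needing care is the bookkeeping with closed versus open neighborhoods; in particular, $w\neq v$ follows automatically, since one is adjacent to $u$ and the other is not.

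For part (b), I will write $\overline N$ for neighborhoods in $\overline G$. For any vertex $x$ we have $\overline N(x)=V\setminus N[x]$, and hence $\overline N[x]=V\setminus N(x)$. Now take distinct $u,v$. The condition $N(u)=N(v)$ holds iff the complements agree, $V\setminus N(u)=V\setminus N(v)$, which is exactly $\overline N[u]=\overline N[v]$. So $u,v$ are nonadjacent twins in $G$ iff they are adjacent twins in $\overline G$.

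One should also check that these notions are consistent with the adjacency status built into the names: nonadjacent twins in $G$ are indeed nonadjacent, since $u\in N(v)=N(u)$ is impossible, and they are therefore adjacent in $\overline G$. This is automatic, so part (b) is just complementation of sets, and neither part presents a real obstacle.
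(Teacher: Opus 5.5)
Your proof is correct and matches the paper's argument essentially step for step. In (a) you use $w\in N(u)=N(v)$ to force the nonadjacent twin into $N[w]=N[u]$, just as the paper derives $u\in N[w]\setminus N[v]=\emptyset$. In (b) you rely on the same identity $N_{\overline G}[x]=V\setminus N_G(x)$ that the paper uses.
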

    
\begin{proof}
For (a), assume $v$ has both nonadjacent twin $u$ and adjacent twin $w$.
Then $w \in N(v) = N(u)$ and so $u \in N[w] \setminus N[v] = \emptyset$, a contradiction. Statement (b) follows from 
 $\overline{N_G(v)} = V\setminus N(v)= N_{\overline G}(v) \cup \{v\} = N_{\overline G}[v]$. 
\end{proof}

In a vertex-transitive graph, either each vertex has a  nonadjacent twin, each vertex has a adjacent twin, or no vertex has either a nonadjacent or an adjacent twin.
In the final case, we say that $G$ is {\it twin-free}. (Sabidussi called such graphs {\it irreducible}.) Figure~\ref{fig:TwinsCirculants} gives examples of each of these within the family of circulant graphs. In the first two examples, twin vertices have the same color.

\begin{figure}[h]
    \centering
    \includegraphics[angle = 270, width= 0.85\textwidth, center]{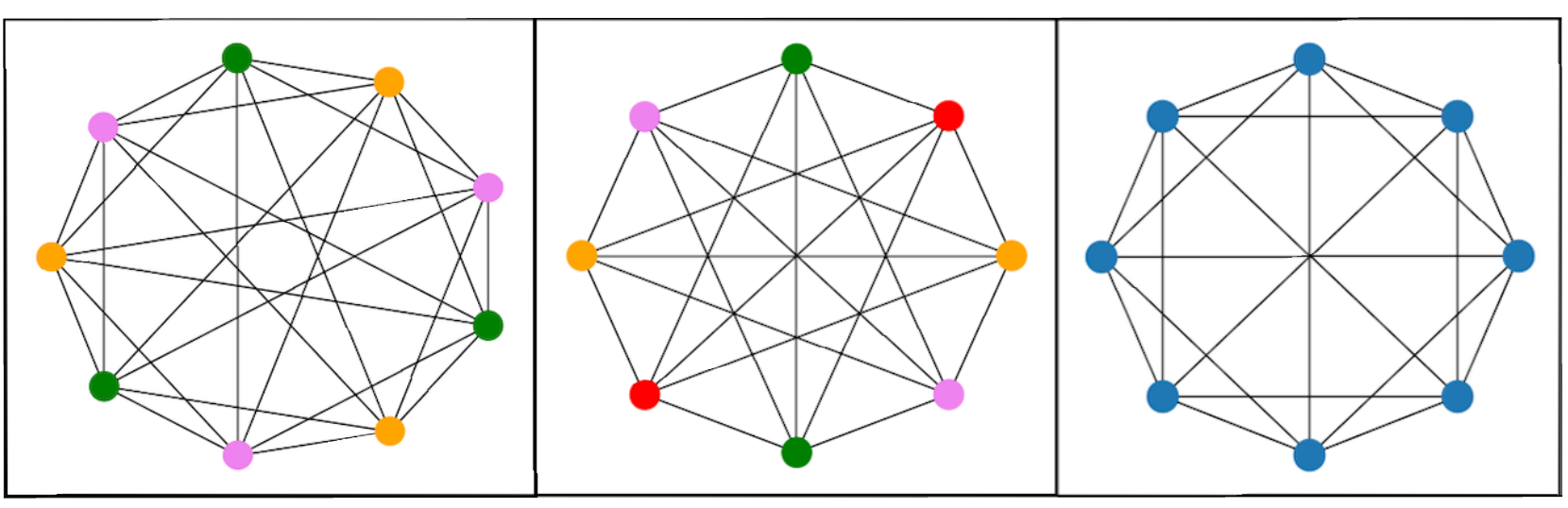}
    \caption{$C_9(\pm 1,\pm 2,\pm 4)$ (with nonadjacent twins), $C_{8}(\pm 1,\pm 3,4)$ (with adjacent twins) and $C_8(\pm 1,\pm 2,4)$ (twin-free).}
    \label{fig:TwinsCirculants}
\end{figure}

Using terminology from~\cite{BCKLPR2020b}, a {\it minimum  twin cover} $T$ of a graph $G$ is a subset of vertices that contains all but one vertex from each set of mutually nonadjacent twin vertices. The following is a corollary of Theorem~19 in~\cite{BCKLPR2020b}.

\begin{prop}\label{cor:BCKLPR}
If every vertex of $G$ has at least one nonadjacent twin, then a minimum twin cover of $G$ is a minimum size determining set for $G$. 
\end{prop}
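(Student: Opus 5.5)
We prove the two inequalities separately: first that a minimum twin cover $T$ is a determining set, then that every determining set has at least $|T|$ vertices. Throughout, the classes of mutually nonadjacent twins partition $V$: "is a nonadjacent twin of, or equal to" is an equivalence relation, since $N(u)=N(v)$ is transitive. By hypothesis every class $C_1,\dots,C_k$ has size at least $2$, and $|T| = \sum_i (|C_i|-1)$.

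\textbf{Lower bound.} Let $D$ be any set with $|D| < |T|$. By pigeonhole, some class $C_i$ satisfies $|D\cap C_i| \le |C_i|-2$, so there are distinct $u,v\in C_i\setminus D$. Let $\sigma$ be the transposition swapping $u$ and $v$ and fixing every other vertex. Since $N(u)=N(v)$, and $u,v$ are nonadjacent and hence not in their own common neighborhood, $\sigma$ preserves adjacency. We check this on pairs: for $w\ne u,v$, we have $uw\in E$ iff $vw\in E$, and the pair $uv$ is mapped to itself. So $\sigma$ is a nontrivial automorphism fixing $D$ pointwise, and $D$ is not determining. This gives $\det(G)\ge |T|$.

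\textbf{Upper bound.} Let $\alpha$ be an automorphism fixing $T$ pointwise. Each class $C_i$ has exactly one vertex $x_i\notin T$, and we need $\alpha(x_i)=x_i$. Automorphisms preserve open neighborhoods, so they map twin classes onto twin classes. Since $|C_i|\ge 2$, the class $C_i$ contains a vertex $y\in T$, and $\alpha(y)=y$. Therefore $\alpha$ maps $C_i$ onto the class containing $y$, which is $C_i$ itself. Since $\alpha$ fixes $C_i\setminus\{x_i\}$ pointwise, it must also fix $x_i$. Hence $\alpha$ is the identity, $T$ is determining, and $\det(G)=|T|$.

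The only step that needs any care is the upper bound, specifically the fact that an automorphism cannot move $x_i$ into a different twin class. The hypothesis that every vertex has a twin is used exactly there: it guarantees each class contains a fixed vertex from $T$. This matches Theorem~19 of~\cite{BCKLPR2020b}, from which one could instead specialize directly.
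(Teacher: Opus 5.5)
Your proof is correct. The paper gives no argument of its own here; it simply records the statement as a corollary of Theorem~19 of \cite{BCKLPR2020b}. You instead prove it directly, by establishing the two inequalities.

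The citation buys brevity and places the result inside a more general framework relating twin covers to determining sets. However, it hides where the hypothesis actually enters. Your version makes this explicit:
\begin{itemize}
\item The lower bound $\det(G)\ge |T|$ holds for every graph, since a determining set may omit at most one vertex from each twin class (otherwise a twin transposition survives).
\item The hypothesis that every class has size at least $2$ is used only in the upper bound. There it guarantees that each class contains a vertex of $T$, which pins the class in place under any automorphism fixing $T$.
\end{itemize}

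Your argument also runs verbatim with closed neighborhoods $N[u]=N[v]$. So it handles adjacent twins directly, rather than by passing to $\overline G$ as the paper does before Corollary~\ref{cor:DetTwins}. It also gives that corollary's count $\det(G)=\frac{n}{t}(t-1)=n(1-\frac{1}{t})$ immediately.
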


Note that if every vertex of $G$ has an adjacent twin, then every vertex of $\overline G$ has a nonadjacent twin by Lemma~\ref{lem:adjNonadjTwins}. Any set of mutually adjacent twin vertices in $G$ will be a set of mutually nonadjacent twin vertices in $\overline G$, so a minimum twin cover in $\overline G$ will also be a minimum adjacent twin cover in $G$. A minimum determining set for $\overline G$ will also be a minimum determining set of $G$. 

\begin{cor}\label{cor:DetTwins}
Let $G$ be a vertex-transitive graph of order $n$  such that every vertex is in a set of  $t>1$ mutually (nonadjacent or adjacent) twin vertices. Then $\det(G) = n(1-\frac{1}{t})$.
\end{cor}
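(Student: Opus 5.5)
The plan is to reduce to the nonadjacent case and then count a minimum twin cover using vertex-transitivity. By Lemma~\ref{lem:adjNonadjTwins}(a), no vertex has both a nonadjacent and an adjacent twin. Vertex-transitivity then forces the twins of every vertex to be of the same kind throughout $G$, so either all twin classes are nonadjacent or all are adjacent. In the adjacent case we pass to $\overline G$. Since $\aut(\overline G) = \aut(G)$, the complement is vertex-transitive with the same determining sets, so $\det(\overline G) = \det(G)$. By Lemma~\ref{lem:adjNonadjTwins}(b), the sets of mutually adjacent twins in $G$ are exactly the sets of mutually nonadjacent twins in $\overline G$, with the same size $t$. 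It therefore suffices to treat the case where every vertex lies in a set of $t$ mutually nonadjacent twins.

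In that case, being nonadjacent twins, i.e.\ $N(u)=N(v)$, is an equivalence relation on $V$. Its classes are the maximal sets of mutually nonadjacent twins. By hypothesis (and vertex-transitivity, since automorphisms map twin classes to twin classes), every class has exactly $t$ elements. The classes partition $V$, so there are $n/t$ of them. A minimum twin cover $T$ contains all but one vertex from each class, so
\[
|T| = \frac{n}{t}(t-1) = n\left(1-\frac1t\right).
\]
Since $t>1$, every vertex has at least one nonadjacent twin. Proposition~\ref{cor:BCKLPR} then says $T$ is a minimum size determining set, which gives $\det(G)=n(1-\frac1t)$.

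The only step needing care is the claim that all twin classes have the same size $t$, rather than $t$ being merely a lower bound. I will read the hypothesis ``every vertex is in a set of $t$ mutually twin vertices'' as referring to its full twin class, so that this is immediate. If it is not, the fix is that any automorphism $\varphi$ satisfies $N(\varphi(u)) = \varphi(N(u))$, so $\varphi$ maps twin classes bijectively onto twin classes; transitivity then makes all classes equal in size.
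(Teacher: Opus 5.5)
Your proposal is correct and follows the paper's argument: reduce the adjacent case to the nonadjacent one via the complement (Lemma~\ref{lem:adjNonadjTwins}(b) together with $\aut(\overline G)=\aut(G)$), then apply Proposition~\ref{cor:BCKLPR} and count $t-1$ vertices from each of the $n/t$ twin classes. Your extra observation, that automorphisms permute twin classes and so vertex-transitivity forces all classes to have size exactly $t$, supplies a detail the paper leaves implicit.
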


\subsection{Twin Quotient Graph}\label{subsec:TwinQuoGraph}

\medskip
 Define $u \sim v$ iff $N(u) = N(v)$  in a graph $G$. This is an equivalence relation  on $V(G)$; the corresponding quotient graph $\widetilde G$ is obtained by collapsing the vertices in each equivalence class to a single vertex. More precisely, the vertices of $\widetilde G$ are the equivalence classes 
 $[u] = \{v \in V(G) \mid u \sim v\}$, with $[u]$ and $[z]$ adjacent in $\widetilde G$ iff there exist $v \in [u]$, $w \in [z]$ such that $v$ and $w$ are adjacent in $G$. 
 By definition of $\sim$, all vertices in an equivalence class have the same neighbors, so in this case, distinct vertices $u$ and $z$ are adjacent in $G$ iff  $[u]$ and $[z]$ are adjacent in $\widetilde G$.
 We refer to $\widetilde G$ as the \emph{nonadjacent twin quotient graph} 
 and $[u]$ as the \emph{nonadjacent twin class} of $u$.
 Although $\widetilde G$  has no nonadjacent twins (see ~\cite{S1964}), it may have adjacent twins; for example, if $G = K_{2,3}$ then $\widetilde G = K_2$.

  Similarly, we can define $u \wedge v$ iff 
  $u$ and $v$ are adjacent twin vertices in $G$, and create a corresponding \emph{adjacent twin quotient graph}, which we denote by  $\widehat G$. In this case, distinct vertices $u$ and $z$ are adjacent in $G$ iff either $[u]=[z]$ or $[u]$ and $[z]$ are adjacent in $\widehat G$. 
  It is straightforward to verify that the adjacent twin quotient graph of the complement $\overline G$ is the complement of the nonadjacent twin quotient graph of $G$; it has no adjacent twins, but may have nonadjacent twins.

 \medskip

 \begin{ex}\label{ex:TumblingDown}
   Figure~\ref{fig:TumblingDown} displays a sequence of graphs obtained by alternatively collapsing adjacent and nonadjacent twins. 
 \end{ex}

 \begin{figure}[h]
    \centering
    \includegraphics[width= 0.75\textwidth, center]{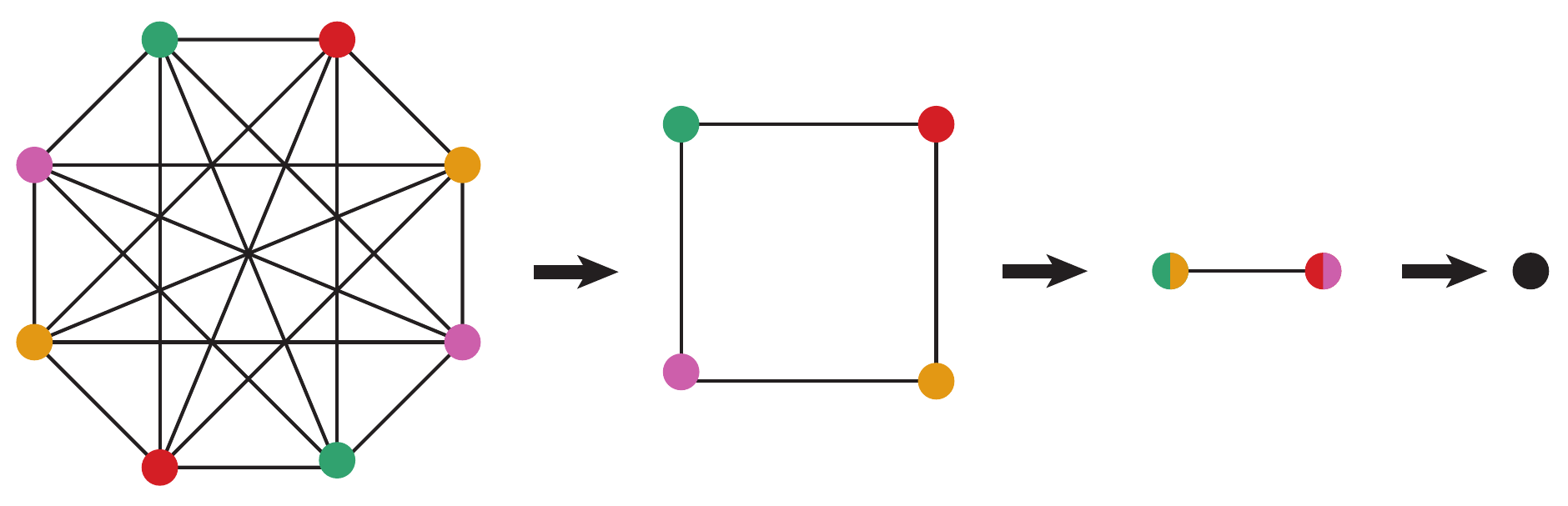}
    \caption{Twin quotient graph sequence of $C_8(\pm 1,\pm 3,4)$.}
    \label{fig:TumblingDown}
\end{figure}

The following result appeared first in \cite{BC2021} and is proved in \cite{CL2024}. 

\begin{thm}\label{thm:DistTwins}
Let $G$ be a graph in which every vertex is in a set of  $t>1$ mutually nonadjacent twin vertices. 
If $\dist(\widetilde G) = \widetilde d$, then $\dist(G) = d$, where $d$ is the smallest positive integer such that $\binom{d}{t} \ge \widetilde d$.
\end{thm}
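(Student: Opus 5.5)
The plan is to pass between colorings of $G$ and colorings of $\widetilde G$. The key is to describe $\aut(G)$ precisely in terms of $\aut(\widetilde G)$ and the permutations inside twin classes.

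\textbf{Structure of $\aut(G)$.} Every automorphism of $G$ preserves open neighborhoods, so it maps nonadjacent twin classes to nonadjacent twin classes. It therefore induces an automorphism of $\widetilde G$, because $[u]\sim[z]$ in $\widetilde G$ iff $u,z$ are adjacent in $G$. Conversely, since all classes have the same size $t$, any automorphism of $\widetilde G$ lifts to $G$ by choosing bijections between corresponding classes. Likewise, any permutation of the vertices within each class separately is an automorphism of $G$, since twins have identical neighborhoods. Consequently $\aut(G)$ is exactly the set of bijections that map classes to classes, inducing an element of $\aut(\widetilde G)$, with arbitrary bijections between matched classes.

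\textbf{Reduction to a coloring problem.} Take a coloring $c$ of $G$ with $d$ colors. Associate to each class $[u]$ the multiset of colors it receives, and call this its label. If two vertices in one class share a color, transposing them is a nontrivial automorphism fixing the coloring. So a distinguishing coloring must color each class with $t$ distinct colors, and then the label is a $t$-subset of $\{1,\dots,d\}$. With that condition in place, an automorphism preserves $c$ iff it sends each class to a class with the same label and matches equal colors. The matching is forced, because colors within a class are distinct. Hence $c$ is distinguishing for $G$ iff (i) every class is rainbow and (ii) the labeling of $V(\widetilde G)$ by $t$-subsets is a distinguishing coloring of $\widetilde G$. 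Condition (ii) holds because the only automorphisms preserving $c$ are lifts of label-preserving automorphisms of $\widetilde G$. Such a lift is trivial iff the underlying automorphism of $\widetilde G$ is trivial, once (i) pins down the within-class bijections.

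\textbf{Counting.} The $t$-subsets act as colors for $\widetilde G$, and there are $\binom{d}{t}$ of them. So a distinguishing $d$-coloring of $G$ exists iff $\binom{d}{t}\ge \dist(\widetilde G)=\widetilde d$. For the "if" direction, take a distinguishing $\widetilde d$-coloring of $\widetilde G$, injectively assign distinct $t$-subsets to its colors, and color each class bijectively by its subset. For "only if", the labels from (ii) form a distinguishing coloring of $\widetilde G$ that uses at most $\binom{d}{t}$ colors. Minimizing $d$ gives the claim.

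\textbf{Main obstacle.} The step needing most care is the lifting argument in the first paragraph. It relies on all classes having the same size $t$, as the hypothesis says, and on $\widetilde G$ having no further collapsing that interacts with the classes. Adjacency between distinct classes is well-defined in $\widetilde G$, and vertices within a class are nonadjacent, so the lift preserves edges. I would verify both facts explicitly.
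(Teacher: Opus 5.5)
Your proof is correct and complete. Each twin class must be rainbow, because twin transpositions are automorphisms. Once that holds, color-preserving automorphisms of $G$ correspond bijectively to automorphisms of $\widetilde G$ that preserve the $t$-subset labels, so a distinguishing coloring from a palette of $d$ colors exists iff $\binom{d}{t}\ge\widetilde d$.

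The paper gives no proof of this theorem itself; it defers to \cite{BC2021} and \cite{CL2024}. Your lifting step is the same mechanism as in Proposition~\ref{prop:TwinsAutoQuotient}, and you rightly note that it needs only equal class sizes, not vertex-transitivity. The worry in your last paragraph about further collapsing in $\widetilde G$ is unnecessary. The lift uses only that distinct $u,z$ are adjacent in $G$ iff $[u][z]\in E(\widetilde G)$, and this holds whether or not $\widetilde G$ has adjacent twins.
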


Since $\dist(G) = \dist(\overline G)$, the same result holds for graphs in which every vertex is in a set of $t$ mutually adjacent twin vertices, and $\widetilde G $ is replaced with $\widehat G$.

\begin{ex}
Applying Theorem~\ref{thm:DistTwins} to the previous example, we start with the obvious fact that $\dist(K_1)=1$ to get $\dist(K_2) = 2$, $\dist(C_4) = 3$ and $\dist(C_8(\pm 1, \pm 3 , 4)) = 3$.
\end{ex}

The relationship between a vertex transitive graph and its twin quotient graph can be neatly described in terms of a graph product. The lexicographic product of two graphs $X$ and $Y$, denoted by $X[Y]$, has vertex set $V(X) \times V(Y)$, with $(x, y)$ adjacent to $(x', y')$ iff either $(x, x') \in E(\Gamma)$ or $x=x'$ and $(y,y') \in E(Y)$. If $G$ is a vertex-transitive graph with nonadjacent twins, then each twin class induces the null graph $N_t$ and by definition, $G \cong \widetilde G[N_t]$. 

A famous theorem by Sabidussi \cite{S1959} states that if $X$ and $Y$  are finite graphs, then $\aut(X[Y])$ is the wreath product $\aut(X) \wr \aut(Y)$ iff $Y$ is connected whenever $X$ has (distinct) nonadjacent twins and $\overline{Y}$ is connected whenever $X$ has (distinct) adjacent twins.
As noted earlier, $\widetilde G$ has no nonadjacent twins but it may have adjacent twins. Since $\overline N_t = K_t$ is connected. we can apply Sabidussi's theorem to obtain the following. 

\begin{prop}\label{prop:TwinsAutoQuotientSab}
  Let $G$ be a vertex-transitive graph in which every vertex is in  a set of $t>1$ mutually nonadjacent twin vertices. Then 
  \[
  \aut(G) = \aut(\widetilde G) \wr S_t.
  \]
\end{prop}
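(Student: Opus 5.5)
The plan is to realize $G$ as a lexicographic product and then apply Sabidussi's theorem from \cite{S1959}, checking its hypotheses carefully.

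First I will show that $G \cong \widetilde G[N_t]$. Every nonadjacent twin class has exactly $t$ vertices: by hypothesis each vertex lies in a set of $t$ mutually nonadjacent twins, and vertex-transitivity makes all classes the same size. Each class induces $N_t$, since nonadjacent twins are, by definition, not adjacent. As noted in Subsection~\ref{subsec:TwinQuoGraph}, distinct vertices $u,z$ are adjacent in $G$ iff $[u]$ and $[z]$ are adjacent in $\widetilde G$. So once we choose a bijection of each class with $V(N_t)$, the map $u \mapsto ([u], \text{label of } u)$ is an isomorphism onto $\widetilde G[N_t]$. Here vertices in the same class are nonadjacent, matching the fact that $N_t$ has no edges.

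Next I will apply Sabidussi's theorem with $X = \widetilde G$ and $Y = N_t$. It gives $\aut(X[Y]) = \aut(X)\wr\aut(Y)$ provided two conditions hold.
\begin{enumerate}[(i)]
\item If $X$ has distinct nonadjacent twins, then $Y$ must be connected.
\item If $X$ has distinct adjacent twins, then $\overline Y$ must be connected.
\end{enumerate}
For (i), $\widetilde G$ has no nonadjacent twins. Directly: if $N_{\widetilde G}([u]) = N_{\widetilde G}([z])$, then lifting through the adjacency correspondence gives $N_G(u)=N_G(z)$, so $[u]=[z]$. Hence condition (i) is vacuous, which matters because $N_t$ is disconnected for $t>1$. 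For (ii), $\overline{N_t}=K_t$ is connected, so the condition holds whether or not $\widetilde G$ has adjacent twins. Since $\aut(N_t)=S_t$, the conclusion is $\aut(G)=\aut(\widetilde G)\wr S_t$.

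The main obstacle is verifying the hypotheses in the right form, not the final invocation. The key point is the claim that the quotient has no nonadjacent twins; it needs the small lifting argument above, and it is exactly what rescues us from the disconnectedness of $N_t$. A secondary point is that vertex-transitivity is used only to make every class have the same size $t$, so that $G$ is a genuine lexicographic product rather than a graph with unequal blow-ups.
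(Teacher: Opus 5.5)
Your proposal is correct and follows the paper's route. Like the paper, you identify $G \cong \widetilde G[N_t]$ and invoke Sabidussi's lexicographic-product theorem: the $Y$-connectedness condition is vacuous because $\widetilde G$ has no nonadjacent twins, and the $\overline{Y}$ condition holds because $\overline{N_t}=K_t$ is connected. The only difference is that you verify the twin-freeness of $\widetilde G$ directly and spell out the isomorphism, where the paper cites Sabidussi and says the isomorphism holds by definition.
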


If $G$ has adjacent twins, we can apply this proposition to its complement.

\medskip

For those less familiar with the wreath product construction, we provide another formulation. Let $G$ be a vertex-transitive graph of order $n$, in which each vertex has a nonadjacent twin class of order $t$.
The function  $\tau_{u,v}:V(G) \to V(G)$ that interchanges distinct nonadjacent twins $u$ and $v$ and leaves all other vertices fixed is a graph automorphism, which we call a {\it twin transposition}.
The subgroup generated by all twin transpositions is $(S_t)^{n/t}$, a direct product of symmetric groups, one corresponding to each twin class.
Because graph automorphisms must take twin vertices to twins vertices, any $\alpha \in \aut(G)$ induces 
$\widetilde \alpha \in \aut(\widetilde G)$
given by $\widetilde \alpha([u]) = [\alpha(u)]$ 
for all $u \in V(G)$. Let $\lambda: \aut(G) \to \aut(\widetilde G)$ be the group homomorphism defined by $\lambda (\alpha) = \widetilde \alpha$. It is clear that $\ker(\lambda) = (S_t)^{n/t}$.

\begin{prop}\label{prop:TwinsAutoQuotient}
Let $G$ be a vertex-transitive graph with nonadjacent  twins. Then 
$\lambda$ is is surjective and $\aut(G) \cong (S_t^{n/t}) \rtimes \aut(\widetilde G)$.
\end{prop}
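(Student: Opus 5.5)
The plan is to show that $\lambda$ is surjective by building an explicit section, and then to read off the semidirect product from the resulting split short exact sequence. By vertex-transitivity every nonadjacent twin class has the same size $t$, so there are $n/t$ classes. For each class $[u]$, fix a bijection $\phi_{[u]}:[u]\to\{1,\dots,t\}$, which labels each vertex of $V(G)$ by its class together with an index in $\{1,\dots,t\}$. Given $\beta\in\aut(\widetilde G)$, define $s(\beta):V(G)\to V(G)$ by
\[
s(\beta)(v)=\phi_{\beta([v])}^{-1}\bigl(\phi_{[v]}(v)\bigr),
\]
so $s(\beta)$ sends the $i$th vertex of the class $[v]$ to the $i$th vertex of the class $\beta([v])$. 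Since $\beta$ permutes the classes and each $\phi$ is a bijection, $s(\beta)$ is a bijection of $V(G)$.

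Next I will check that $s(\beta)$ is an automorphism of $G$, using the fact recorded above that distinct vertices $v,w$ are adjacent in $G$ iff $[v]$ and $[w]$ are adjacent in $\widetilde G$. There are two cases.
\begin{enumerate}[(i)]
\item If $[v]=[w]$, then $v,w$ are nonadjacent. Their images lie in the common class $\beta([v])$, so the images are also nonadjacent.
\item If $[v]\neq[w]$, then $s(\beta)(v)$ and $s(\beta)(w)$ lie in the distinct classes $\beta([v])$ and $\beta([w])$. These images are adjacent iff $\beta([v])\sim\beta([w])$ in $\widetilde G$, which holds iff $[v]\sim[w]$, which in turn holds iff $v\sim w$ in $G$.
\end{enumerate}
Hence $s(\beta)\in\aut(G)$ and $\lambda(s(\beta))=\beta$, so $\lambda$ is surjective. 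Moreover $s(\beta\gamma)=s(\beta)s(\gamma)$ directly from the formula, so $s$ is a homomorphism and splits $\lambda$.

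It remains to identify the kernel. An automorphism $\alpha$ lies in $\ker\lambda$ iff it maps each class to itself, i.e.\ it permutes the vertices within each class independently. Every such permutation is an automorphism, because it preserves the classes, which are independent sets and have identical neighborhoods. So $\ker\lambda$ is the product over the $n/t$ classes of the symmetric group on each class, that is $(S_t)^{n/t}$, generated by the twin transpositions as noted before the statement. This gives the exact sequence
\[
1\to (S_t)^{n/t}\to\aut(G)\xrightarrow{\lambda}\aut(\widetilde G)\to 1,
\]
and since $s$ is a splitting homomorphism, $\aut(G)=\ker\lambda\rtimes s(\aut(\widetilde G))\cong (S_t)^{n/t}\rtimes\aut(\widetilde G)$. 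One could instead cite Proposition~\ref{prop:TwinsAutoQuotientSab} and unpack the wreath product, but the direct argument avoids that.

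The main obstacle is making sure that $s$ is genuinely a homomorphism, which is needed for a semidirect product decomposition and not just surjectivity. This is exactly why the bijections $\phi_{[u]}$ are fixed once, independently of $\beta$: with consistent labels, composition works out as
\[
s(\beta)s(\gamma)(v)=\phi_{\beta\gamma([v])}^{-1}\bigl(\phi_{[v]}(v)\bigr)=s(\beta\gamma)(v).
\]
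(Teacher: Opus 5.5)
Your proof is correct and follows essentially the same route as the paper: fix a labeling of each twin class, lift each element of $\aut(\widetilde G)$ to the label-preserving permutation, check that this lift is an automorphism, and use the subgroup of label-respecting automorphisms as a complement to $\ker\lambda=(S_t)^{n/t}$. Your version is a bit more careful than the paper's, since you treat the same-class case of the adjacency check separately and verify explicitly that the section $s$ is a homomorphism, which the paper's claim that $H$ is a subgroup mapped isomorphically onto $\aut(\widetilde G)$ tacitly assumes.
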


\begin{proof}
We can arbitrarily label the elements of a twin class $u_1, u_2, \dots, u_t$.
Given $\widetilde \sigma \in \aut(\widetilde G)$, define a permutation $\sigma $ on $V(G)$ by $\sigma(u_i) = v_i$ iff $\widetilde \sigma([u_i]) = [v_i]$. To show that $\sigma$ is an automorphism, let $x_j, y_k \in V(G)$. By definition of the twin quotient graph,  
\begin{align*}
x_j y_k \in E(G) & \iff [x_j][y_k] \in E(\widetilde G) \\
&\iff \widetilde \sigma([x_j])\widetilde \sigma([y_k]) \in E(\widetilde G) \iff \sigma(x_j)\sigma(y_k) \in E(G).
\end{align*}
Let $H = \{\gamma \in \aut(G) \mid \gamma(u_i) = v_i\}$; that is, $H$ is the set of automorphisms that respect the arbitrarily chosen subscripts. 
Then $H$ is a subgroup of $\aut(G)$ such that the restriction of $\lambda$ to $H$ is an isomorphism $\lambda: H \to \aut(\widetilde G)$. 
Note that $H$ is not a normal subgroup of $\aut(G)$; for example, if $\sigma(u_i) = v_i$, then  $\sigma\circ \tau_{u_1, u_2} (u_1) = v_2$ but $\tau_{u_1, u_2} \circ \sigma(u_1) = v_1$. Thus $\aut(G)$ is a semidirect product, not a direct product.
\end{proof}

\begin{ex} 
Given that Proposition~\ref{prop:TwinsAutoQuotient} applies to vertex-transitive graphs with either nonadjacent or adjacent twins, we get
\[ 
\aut(C_8(\pm 1, \pm 3, 4)) \cong (S_2)^3 \rtimes \aut(C_4) 
\cong (S_2)^3 \rtimes ((S_2)^2 \rtimes S_2).
\]
Here, the presence of twins completely determines the automorphism group.
\end{ex}

We close this section by noting that the property of being arc-transitive is preserved when passing to the twin quotient graph.

\begin{cor}\label{cor:TwinsArcTrans}
 Let $G$ be a vertex-transitive graph with nonadjacent twins. Then $G$ is arc-transitive iff $\widetilde G$ is arc-transitive. 
\end{cor}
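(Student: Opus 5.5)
The plan is to use the surjective homomorphism $\lambda:\aut(G)\to\aut(\widetilde G)$ from Proposition~\ref{prop:TwinsAutoQuotient}, together with the fact that $u$ and $v$ are adjacent in $G$ iff $[u]$ and $[v]$ are adjacent in $\widetilde G$. Arcs of $G$ will then correspond to arcs of $\widetilde G$ via $(u,v)\mapsto([u],[v])$. This map is well defined and surjective: adjacent vertices are never nonadjacent twins, so $[u]\neq[v]$; and every arc $([u],[v])$ of $\widetilde G$ lifts to the arc $(u,v)$ of $G$. Moreover, for $\alpha\in\aut(G)$ we have $\lambda(\alpha)([u],[v])=([\alpha(u)],[\alpha(v)])$, so the map is equivariant.

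For the forward direction, assume $G$ is arc-transitive and take arcs $([u],[v])$ and $([x],[y])$ of $\widetilde G$. Lift them to arcs $(u,v)$ and $(x,y)$ of $G$, and choose $\alpha\in\aut(G)$ with $\alpha(u)=x$ and $\alpha(v)=y$. Then $\lambda(\alpha)$ sends $([u],[v])$ to $([x],[y])$, so $\widetilde G$ is arc-transitive. This direction needs only that $\lambda$ is a homomorphism into $\aut(\widetilde G)$.

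For the converse, assume $\widetilde G$ is arc-transitive and take arcs $(u,v)$ and $(x,y)$ of $G$.
\begin{enumerate}[(1)]
\item Pick $\widetilde\sigma\in\aut(\widetilde G)$ with $\widetilde\sigma([u])=[x]$ and $\widetilde\sigma([v])=[y]$.
\item By surjectivity of $\lambda$, choose $\sigma\in\aut(G)$ with $\lambda(\sigma)=\widetilde\sigma$. Then $\sigma(u)\in[x]$ and $\sigma(v)\in[y]$.
\item Compose with twin transpositions: if $\sigma(u)\neq x$, apply $\tau_{\sigma(u),x}$, and if $\sigma(v)\neq y$, apply $\tau_{\sigma(v),y}$. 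These are automorphisms, and since $[x]\neq[y]$ they act on disjoint classes and do not interfere with each other.
\end{enumerate}
The resulting automorphism maps $(u,v)$ to $(x,y)$, so $G$ is arc-transitive.

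The only delicate point is the lifting step in the converse, namely that automorphisms of $\widetilde G$ lift to $G$. This is exactly the surjectivity part of Proposition~\ref{prop:TwinsAutoQuotient}, which I take as given. The remaining work is to check that $[x]\neq[y]$, which holds because adjacent vertices have distinct twin classes, so that the two corrective transpositions act independently. I expect no further obstacle.
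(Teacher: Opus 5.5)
Your proposal is correct and follows essentially the same route as the paper. The forward direction pushes an arc-mapping automorphism down to $\widetilde G$; the converse lifts an automorphism of $\widetilde G$ and then corrects it with twin transpositions. The paper redoes the labeled lift from the proof of Proposition~\ref{prop:TwinsAutoQuotient}, whereas you cite its surjectivity, and you also make explicit that $[x]\neq[y]$, so the two corrective transpositions act on disjoint classes.
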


\begin{proof}
Let $u, v, x$ and $y$ be vertices in $G$. 
First assume $G$ is arc-transitive and $[u][v], [x][y] \in E(\widetilde G)$. Then $uv, xy \in E(G)$.
By assumption, there exists $\alpha\in \aut(G)$ such that $\alpha(u) = x$ and $\alpha(v) = y$. Then 
$\alpha^*([u]) = [\alpha(u)] = [x]$ and $\alpha^*([v]) = [\alpha(u)] = [x]$.

Next, assume $\widetilde G$ is arc-transitive and  $uv, xy \in E(G)$. Then $[u][v], [x][y] \in E(\widetilde G)$.
By assumption, there exists $\widetilde \tau \in \aut(\widetilde G)$ such that $\widetilde \tau([u]) = [x]$ and $\widetilde \tau([v]) = [y]$. 
As in the proof of Proposition~\ref{prop:TwinsAutoQuotient}, we  arbitrarily select a representative $w_1 \in V(G)$ for each twin class  and label the elements of the class $w_1, w_2, \dots, w_t$. 
We define $\tau \in \aut(G)$ by $\tau(w_i) = z_i$ iff $\widetilde \tau([w]) = [z]$. 
In particular, $\tau(u_i) = x_j$ and $\tau(v_k) = y_\ell$ for some $i, j, k, \ell \in \{1, \dots, t\}$. We can compose $\tau$ with appropriate twin transpositions to find an automorphism of $G$ that takes $u$ to $x$ and $v$ to $y$.
\end{proof}

\section{Twins in Circulant Graphs}\label{sec:TwinsinCirculants}

We begin by characterizing  the set of nonadjacent twins in a general circulant graph $X(G,A)$. Recall that for each $g \in G$, $\alpha_g(x) = xg$ is an automorphism of $X(G,A)$. The open neighborhood of $g$ in $X(G,A)$ can be expressed as
\[
N(g) = \{x \in G \mid xg^{-1} \in A\} = \{x \in G \mid x= \alpha_g(a) \text{ for some } a \in A\} = \alpha_g(A).
\]

 By definition, $g$ is a nonadjacent twin of $e_G$ iff $N(g) = N(e_G)$ iff $\alpha_g(A) = A$. Letting $\stab(A)$ denote the stabilizer subgroup of $A$ in $\aut(X(G,A))$, we get that 
 the set $T$ of nonadjacent twins of $e_G$ in $X(G,A)$ is the subgroup
    \[
    T = G \cap \stab(A) = \{g \in G \mid \alpha_g(A) = A\} \le G.
    \]
\begin{lemma}
    Each right coset of $T$ in $G$ is a nonadjacent twin class of $X(G,A)$. Moreover, $A$ is a union of nontrivial right cosets of $T$.
\end{lemma}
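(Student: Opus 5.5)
The plan is to compute the open neighborhoods explicitly as right translates of $A$ and compare them directly. First I will check that $T$ is a subgroup of $G$. The paper asserts this, but it is quick to confirm with the description $T=\{g\in G \mid Ag = A\}$, since $\alpha_g(A)=Ag$. This is the set-stabilizer of $A$ under right multiplication: it contains $e_G$, and is closed under products and inverses because $Ag=A$ implies $A=Ag^{-1}$.

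For the first claim, I will take $g,h\in G$ and use $N(g)=\alpha_g(A)=Ag$. Then $g$ and $h$ are nonadjacent twins (or equal) iff $Ag=Ah$. Multiplying on the right by $h^{-1}$, this holds iff $A(gh^{-1})=A$, i.e.\ iff $gh^{-1}\in T$. That condition is precisely $g\in Th$, i.e.\ $Tg=Th$. Hence the equivalence classes of $\sim$, the nonadjacent twin classes, are exactly the right cosets $Tg$. This step is essentially the observation already made for $e_G$, transported by the regular action.

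For the second claim, I must show that $a\in A$ and $t\in T$ imply $ta\in A$, so that $A$ is a union of right cosets $Ta$. This is the only step needing care. The definition of $T$ directly gives $aT\subseteq A$, which is closure under \emph{left} cosets. To convert this, I will use that $A$ is inverse-closed. Since $a^{-1}\in A$, we get $a^{-1}t^{-1}\in A$, as $t^{-1}\in T$. Inverting then gives $ta\in A$. So $Ta\subseteq A$ for every $a\in A$, and $A=\bigcup_{a\in A}Ta$. (For abelian $G$, such as $\mathbb Z_n$, left and right cosets coincide and this point disappears.)

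Finally, for nontriviality, the only trivial coset is $T=Te_G$ itself. It contains $e_G\notin A$, so it is not among the cosets making up $A$. Thus $A$ is a union of right cosets of $T$, none of which is $T$. This also matches the fact that vertices of the twin class of $e_G$ are not adjacent to $e_G$. The main (small) obstacle is the left/right coset mismatch in the second claim, which inverse-closure of $A$ resolves.
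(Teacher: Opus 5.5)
Your proof is correct and follows the paper's route: the twin-class part is the same computation $Ax=Ay \iff yx^{-1}\in T \iff y\in Tx$. For the second claim, the paper stops at $aT\subseteq A$, which only exhibits $A$ as a union of \emph{left} cosets; your inverse-closure step ($a^{-1}t^{-1}\in A$, hence $ta\in A$, i.e.\ $TA=A$) supplies the piece the paper omits when $G$ is nonabelian.
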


\begin{proof} 
Let $x, y\in G$. By definition $y \in [x]$ iff $N(y) = N(x)$. As noted above, this can be written as $\alpha_y(A) = \alpha_x(A)$, or $Ay=Ax.$ This can be reformulated as $A(yx^{-1}) = A$ or $A = \alpha_{yx^{-1}}(A)$. By construction, this is equivalent to  $yx^{-1}  \in T$ or $y \in Tx$.

Let $a \in A$ and $t\in T$. Since $T \subseteq \stab(A)$,
$
A = \alpha_t(A) = \{at \mid a \in A\}$
 Since $at \in A$ for all $t \in T$, $aT \subseteq A$. Because $e_G \notin A$, each coset of $T$ in $A$ is nontrivial.        
 \end{proof}

 We now apply this to circulant graphs, where $G$ is the abelian group $\mathbb Z_n$, all of whose subgroups are cyclic. Thus $T = \langle w \rangle$ for some $w \in G$. If $w = 0$, then $C_n(A)$ has no (distinct) nonadjacent twins. 
If $w \in U(n)$, then $T = G$ and every nonzero vertex is a nonadjacent twin of $0$, implying that $C_n(A)$ is an empty graph (equivalently, $A = \emptyset$).

\begin{prop}\label{prop:WhenNonadjTwins}
    A nonempty circulant graph $C_n(A)$  has nonadjacent twins iff  there exists $0 \neq w \in \mathbb Z_n\setminus U(n)$ such that $A$ is a
    union of nontrivial cosets of $\langle w \rangle$. 
    In this case, the nonadjacent twin classes are the cosets of $\langle w \rangle$ where $w$ is the element of $\mathbb Z_n$ of maximum order satisfying the preceding statement.
\end{prop}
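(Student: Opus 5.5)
The plan is to specialize the preceding lemma to $G=\mathbb Z_n$ in additive notation, where right cosets of $T$ are simply cosets $x+T$. Recall that $T=\{g\in\mathbb Z_n \mid g+A=A\}$ is the set of nonadjacent twins of $0$ together with $0$. Since $\mathbb Z_n$ is cyclic, $T=\langle w_0\rangle$ for some $w_0$.

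For the forward direction, suppose $C_n(A)$ has nonadjacent twins. By vertex-transitivity, $0$ has a nonadjacent twin, so $T\neq\{0\}$ and $w_0\neq 0$. Because $A$ is nonempty, $w_0\notin U(n)$: otherwise $T=\mathbb Z_n$, and the discussion preceding the statement shows that this forces $A=\emptyset$. The preceding lemma then says $A$ is a union of nontrivial cosets of $\langle w_0\rangle$, so $w=w_0$ works.

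For the converse, suppose $0\neq w\notin U(n)$ and $A$ is a union of cosets of $\langle w\rangle$. Then $w+A=A$, since translating a coset $a+\langle w\rangle$ by $w$ returns the same coset. Hence $N(w)=w+A=A=N(0)$. As $w\neq 0$, the vertices $w$ and $0$ are distinct nonadjacent twins. The hypothesis $w\notin U(n)$ is not needed here; it only rules out the degenerate empty case.

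For the second claim, let $W$ be the set of nonzero nonunits $w$ such that $A$ is a union of cosets of $\langle w\rangle$. I will show that $w\in W$ iff $\langle w\rangle\le T$. Indeed, $A$ is a union of $\langle w\rangle$-cosets iff $w+A=A$ iff $w\in T$ iff $\langle w\rangle\le T$. Thus $T$ is the largest subgroup of the form $\langle w\rangle$ with $w\in W$. Since subgroups of $\mathbb Z_n$ are determined by their orders, an element $w\in W$ of maximum order satisfies $|\langle w\rangle|=|T|$, and hence $\langle w\rangle=T$. By the preceding lemma, the nonadjacent twin classes are exactly the cosets of $T=\langle w\rangle$.

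The only delicate point is the maximality step. One must check that maximum order within $W$ identifies $T$ itself, rather than just some subgroup of it, and that "nontrivial cosets" means cosets other than $T$, which holds because $0\notin A$. Both follow directly from the equivalence $w\in W\iff w\in T\setminus\{0\}$ and the uniqueness of subgroups of each order in a cyclic group.
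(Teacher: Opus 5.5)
Your proposal is correct and takes the paper's route: specialize the lemma on cosets of $T=\{g : \alpha_g(A)=A\}$ to the cyclic group $\mathbb Z_n$, write $T=\langle w\rangle$, and rule out $w=0$ and $w\in U(n)$, the latter because it forces $A=\emptyset$. You also spell out two steps the paper leaves implicit: the converse ($w+A=A$ gives $N(w)=N(0)$), and why an element of maximum order generates $T$ (because $w\in W$ iff $\langle w\rangle\le T$).
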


\begin{ex}
    For $C_{60}(\pm 1, \pm 9, \pm 11, \pm 19, \pm 21, \pm 29)$, 
    there are two ways to write $A$ as 
    a union of nontrivial cosets:
    \[
        A  = (1 + \langle 20 \rangle) \cup  (9 + \langle 20 \rangle) \cup  (11 + \langle 20 \rangle) \cup (19 + \langle 20 \rangle )\\
         = (1 + \langle 10 \rangle) \cup (9 + \langle 10 \rangle). 
    \]
    The generator of maximum order in this case is $w = 10$ and so by Proposition~\ref{prop:WhenNonadjTwins} the nonadjacent twin classes are the cosets of $\langle  10 \rangle$, each of size $6$.
\end{ex}

\begin{ex} 
We can use  Proposition~\ref{prop:WhenNonadjTwins} to find all circulant graphs of order $30$ in which the nonadjacent twin classes are the cosets of  $\langle 6 \rangle$. The orbits not containing $0$ are
$\mathcal O_1 = \pm 1 + \langle 6 \rangle$, 
$\mathcal O_2 =\pm 2 + \langle 6 \rangle$ and
$\mathcal O_3 = 3 + \langle 6 \rangle$. 
There are $2^3 - 1 = 7$ ways of constructing $A$; it is easy to verify that these generate  $7$ non-isomorphic circulant graphs (based on valency, connectedness and bipartiteness).
\end{ex}

Recall that $C_n(A)$ has adjacent twins iff $C_n(\overline A)$ has nonadjacent twins, where $\overline A$ is the complement of $A$ in $\mathbb Z_n\setminus \{0\}$.  Clearly $\overline A \neq \emptyset$ is a union of nontrivial cosets of $\mathbb Z_n$ iff $A \cup \{0\} \neq \mathbb Z_n$ is a union of cosets including the trivial coset.

\begin{cor}\label{cor:WhenAdjTwins}
    A non-complete circulant graph $C_n(A)$  has adjacent twins iff  there exists $0 \neq w \in \mathbb Z_n\setminus U(n)$ such that $A\cup \{0\}$ is 
    a union of cosets of $\langle w \rangle$. 
    In this case, the adjacent twin classes are the cosets of $\langle w \rangle$ where $w$ is the element of $\mathbb Z$ of maximum additive order satisfying the preceding statement.
\end{cor}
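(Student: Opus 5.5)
The plan is to deduce Corollary~\ref{cor:WhenAdjTwins} from Proposition~\ref{prop:WhenNonadjTwins} by passing to the complement. By Lemma~\ref{lem:adjNonadjTwins}(b), two vertices are adjacent twins in $C_n(A)$ iff they are nonadjacent twins in $\overline{C_n(A)} = C_n(\overline A)$. Therefore $C_n(A)$ has adjacent twins iff $C_n(\overline A)$ has nonadjacent twins, and the adjacent twin classes of $C_n(A)$ coincide with the nonadjacent twin classes of $C_n(\overline A)$. The hypothesis that $C_n(A)$ is non-complete is exactly the statement that $\overline A \neq \emptyset$, i.e.\ that $C_n(\overline A)$ is nonempty. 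So Proposition~\ref{prop:WhenNonadjTwins} applies to $C_n(\overline A)$: it has nonadjacent twins iff there is $0\neq w\in\mathbb Z_n\setminus U(n)$ with $\overline A$ a union of nontrivial cosets of $\langle w\rangle$. Moreover, its twin classes are the cosets of $\langle w\rangle$ for $w$ of maximum order with this property.

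The remaining step is a purely set-theoretic translation. Write $\mathbb Z_n = \{0\} \sqcup A \sqcup \overline A$, and recall that the cosets of $\langle w\rangle$ partition $\mathbb Z_n$, with $\langle w\rangle$ itself the unique trivial coset containing $0$. I would show the following equivalence: $\overline A$ is a union of nontrivial cosets iff $A\cup\{0\} = \mathbb Z_n\setminus \overline A$ is a union of cosets.
\begin{itemize}
\item[($\Rightarrow$)] The complement of a union of cosets is again a union of cosets. Since $\overline A$ avoids $\langle w\rangle$, the complement $A\cup\{0\}$ contains the trivial coset.
\item[($\Leftarrow$)] If $A\cup\{0\}$ is a union of cosets, it must contain the whole coset of $0$, namely $\langle w\rangle$. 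Hence $\overline A$ is a union of cosets, none of them trivial.
\end{itemize}
This step also requires $\overline A\neq\emptyset$ (equivalently $A\cup\{0\}\neq\mathbb Z_n$), which is already supplied by non-completeness. The same $w$ works on both sides, so the set of admissible $w$ for $A\cup\{0\}$ equals that for $\overline A$. In particular, the maximal-order choice is the same, which gives the claimed description of the adjacent twin classes.

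I do not expect a genuine obstacle; the only care needed is bookkeeping of the trivial coset and of the element $0$ under complementation. In particular, one must check that the excluded cases $w=0$ and $w\in U(n)$ behave correctly on both sides. The case $w\in U(n)$ would force $A\cup\{0\}=\mathbb Z_n$, i.e.\ $C_n(A)=K_n$, which is excluded by hypothesis. The case $w=0$ is excluded in both statements.
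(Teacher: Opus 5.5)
Your proposal is correct and follows essentially the same route as the paper. You pass to the complement $C_n(\overline A)$ via Lemma~\ref{lem:adjNonadjTwins}(b), apply Proposition~\ref{prop:WhenNonadjTwins}, and translate ``$\overline A$ is a union of nontrivial cosets of $\langle w\rangle$'' into ``$A\cup\{0\}$ is a union of cosets of $\langle w\rangle$'' by complementation in $\mathbb Z_n$. Your explicit bookkeeping of the trivial coset and of the excluded cases $w=0$ and $w\in U(n)$ just spells out what the paper treats as ``clear.''
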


\begin{ex}
The circulant graph $C_{12}(\pm 1, \pm 5, 6)$ has no nonadjacent twins  because  $\gcd(12, |A|) = 1$. However,  $A \cup \{0\} = \langle 6\rangle \cup (1 + \langle 6\rangle)  \cup (5 + \langle 6 \rangle)$ 
so $C_{12}(\pm 1, \pm 5, 6)$ has adjacent twins. 
    \end{ex}

A \emph{two-generator} circulant graph has a generator set of the form $A = \{\pm i, \pm j\}$ and can be denoted $C_n(i,j)$ where $0<i<j\le n/2$. If $j = n/2$, then 
$C_n(A)$ is $3$-valent; otherwise it is $4$-valent.
In \cite{CL2024}, Cockburn and Loeb characterize all connected, two-generator circulant graphs with twins.

\begin{lemma}[\cite{CL2024}]\label{lem:whenTwins}
Let $0 <i<j\le n/2$, with $\gcd(i,j,n) = 1$.
Then $C_n(i,j)$
has twins only in the cases shown in Table~\ref{tab:TwoGenTwins}. The twin classes in the non-complete graphs are the cosets of $\langle w \rangle$.
\begin{table}[h]
    \center
    \begin{tabular}{|c|c|c|} \hline
     & Nonadjacent twins & Adjacent twins\\
    \hline
    $j = n/2$ & $C_6(1,3): w=2$ & $C_4(1,2) = K_4$\\ \hline 
    $j < n/2$ & $C_8(1, 3): w=2 $  & $C_5(1,2) = K_5$ \\
     & $i+j=\tfrac{n}{2}: w= \tfrac{n}{2} $ & \\
     \hline   
    \end{tabular}
    \caption{Connected two-generator circulant graphs with twins.\label{tab:TwoGenTwins}}
\end{table}
\end{lemma}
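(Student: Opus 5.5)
The plan is to reduce everything to the coset criteria of Proposition~\ref{prop:WhenNonadjTwins} and Corollary~\ref{cor:WhenAdjTwins}, and then count. Write $A=\{\pm i,\pm j\}$, so $|A|=3$ when $j=n/2$ and $|A|=4$ when $j<n/2$.

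\emph{Nonadjacent twins.} These exist iff $A$ is a union of nontrivial cosets of some $\langle w\rangle$. If $d=|\langle w\rangle|\geq 2$, then $d$ divides $|A|$. For adjacent twins the same reasoning applies to $A\cup\{0\}$, which has $4$ or $5$ elements and must be a union of cosets of $\langle w\rangle$, one of them $\langle w\rangle$ itself. This leaves only finitely many values of $d$ in each case. In every case I will use $\gcd(i,j,n)=1$ to pin down $n$. For the converse I will check that each listed graph really does have twins, by exhibiting the coset decomposition.

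\emph{Case $j=n/2$.} For nonadjacent twins we need $d=3$, so $A=a+\{0,n/3,2n/3\}$. Since this coset contains $n/2$, $A=n/2+\langle n/3\rangle$, which forces $\pm i\equiv n/2\pm n/3$, i.e.\ $i=n/6$. Then $\gcd(n/6,n/2,n)=n/6=1$ gives $C_6(1,3)$ with $w=2$.

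For adjacent twins, $d\in\{2,4\}$. If $d=4$, then $A\cup\{0\}=\langle n/4\rangle$. If $d=2$, the cosets are those of $\langle n/2\rangle$, so $\{i,-i\}$ is a coset and $2i=n/2$. Either way $i=n/4$, and the gcd condition gives $n=4$, i.e.\ $K_4$.

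\emph{Case $j<n/2$.} Here $\pm i,\pm j$ are four distinct elements.

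For nonadjacent twins with $d=2$ we have $w=n/2$, and $A$ splits into pairs $\{a,a+n/2\}$. Consider the partner of $i$:
\begin{itemize}
\item If it is $-i$, then $2i=n/2$. Then $\{j,-j\}$ must also be a pair, giving $j=n/4=i$, a contradiction.
\item If $i+n/2=\pm j$, then either $j-i=n/2$, which is impossible since $j<n/2$, or $i+j=n/2$.
\end{itemize}
This yields the infinite family with $w=n/2$.

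If $d=4$, then $A=a+\langle n/4\rangle$. Closure of $A$ under negation together with $0\notin A$ forces $a=\pm n/8$, so $\{i,j\}=\{n/8,3n/8\}$, and the gcd condition gives $C_8(1,3)$. The maximal-order generator is then $w=2$ rather than $4$, consistent with the maximality clause of Proposition~\ref{prop:WhenNonadjTwins}.

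For adjacent twins, $|A\cup\{0\}|=5$ forces $d=5$ and $A\cup\{0\}=\langle n/5\rangle$. So $\{i,j\}=\{n/5,2n/5\}$, the gcd condition gives $n=5$, and the graph is $K_5$.

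The main obstacle is the $d=4$ subcases. There I must verify carefully that a coset $a+\langle n/4\rangle$ avoiding $0$ and closed under negation forces $2a\equiv n/4 \pmod{n/2}$, and handle the $i,j$ ordering. I also need to make sure no configuration is missed when matching the elements $\pm i,\pm j$ into cosets. Finally, I will confirm the stated $w$ in each non-complete case by checking that no larger subgroup works.
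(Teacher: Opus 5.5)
Your proposal is correct. The paper does not prove this lemma itself. It quotes it from \cite{CL2024}, where, as the paper notes, the argument is elementary modular arithmetic. Your route therefore differs from the cited one, but it is exactly the coset-counting method the paper uses for the three-generator analogue.

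What your route buys is a case split controlled by divisibility. The coset size $d\ge 2$ must divide $|A|\in\{3,4\}$ or $|A\cup\{0\}|\in\{4,5\}$, and since $3$ and $5$ are prime, those cases leave a single option. Each surviving case is then fixed by matching $\pm i,\pm j$ against the unique subgroup of order $d$ and applying $\gcd(i,j,n)=1$.

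I checked every subcase and all go through:
\begin{itemize}
\item $i=n/6$ for nonadjacent twins with $j=n/2$;
\item $i=n/4$ for adjacent twins with $j=n/2$, whether $d=2$ or $d=4$;
\item the pairing argument forcing $i+j=n/2$ when $d=2$ and $j<n/2$;
\item $n/8+\langle n/4\rangle$ as the only negation-closed coset of $\langle n/4\rangle$ avoiding $0$;
\item $A\cup\{0\}=\langle n/5\rangle$ for adjacent twins with $j<n/2$.
\end{itemize}

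Two minor remarks. First, for $K_4$ and $K_5$ you are really using the stabilizer criterion, not Corollary~\ref{cor:WhenAdjTwins} as stated. The criterion is that the set of $v$ with $v+(A\cup\{0\})=A\cup\{0\}$ is a subgroup, which is nontrivial iff adjacent twins exist. The corollary excludes complete graphs and requires a non-unit $w$, and $\mathbb Z_5$ has no nonzero non-unit, so state the criterion you use explicitly. Second, your observation that $C_8(1,3)$ also satisfies $i+j=n/2$ is a real point the table leaves implicit. Your $d=4$ analysis shows it is the only member of that family whose twin classes exceed the cosets of $\langle n/2\rangle$. That is exactly what justifies $w=n/2$ for the rest of the family.
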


The proof in \cite{CL2024} uses elementary modular arithmetic techniques. Proposition~\ref{prop:WhenNonadjTwins} and Corollary~\ref{cor:WhenAdjTwins} allow for a more streamlined proof of the following characterization of all connected, three-generator circulant graphs with twins.

\begin{prop}\label{prop:Twins3Gen}
Let $0<i<j<k\le n/2$, with $\gcd(i,j,k,n) = 1$.
Then $C_n(i, j, k)$  has twins only in the cases
shown in Table~\ref{tab:ThreeGenTwins}. The twin classes of the non-complete graphs are the cosets of $\langle w \rangle$.

\begin{table}[h]
    \center
    \begin{tabular}{|c|c|c|} \hline
     & Nonadjacent twins & Adjacent twins\\
    \hline
    $k = n/2$ & $C_{10}(1,3,5) = K_{5,5}: w = 2$ & $C_6(1,2,3) = K_6$\\
    &&$i+j=k = \tfrac{n}{2}: w = \tfrac{n}{2}$ \\ \hline 
     & $C_{12}(1, 3, 5) = K_{6,6}: w = 2$ & $C_7(1,2,3) = K_7$ \\
    $k < n/2$ &   $i+j=\tfrac{n}{3}, 2i+j=k: w = \tfrac{n}{3}$ & \\
    &  $i+k = 2j = \tfrac{n}{2}: w = \tfrac{n}{2}$ &\\ \hline  
    \end{tabular}
    \caption{Connected three-generator circulant graphs with twins.\label{tab:ThreeGenTwins}}    
\end{table}
\end{prop}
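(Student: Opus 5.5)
We use Proposition~\ref{prop:WhenNonadjTwins} for nonadjacent twins and Corollary~\ref{cor:WhenAdjTwins} for adjacent twins. Write $A=\{\pm i,\pm j,\pm k\}$; then $|A|=5$ if $k=n/2$ and $|A|=6$ otherwise. Nonadjacent twins exist iff $A$ is a union of nontrivial cosets of $H=\langle w\rangle$ for some $0\neq w\notin U(n)$. Adjacent twins exist iff $A\cup\{0\}$ is a union of cosets of such an $H$. In either case $|H|=m>1$ divides $n$, and $H=\langle n/m\rangle$. Connectivity, i.e.\ $\gcd(i,j,k,n)=1$, rules out $A\subseteq H$ whenever $H\ne\mathbb Z_n$. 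Since any subgroup containing a valid $H$ is generated by a multiple of $w$, we may replace $H$ by a subgroup of prime order $p$ that it contains. So it suffices to classify the cases $m=p$ prime, and then take the maximal $w$ as in Proposition~\ref{prop:WhenNonadjTwins}.

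\textbf{Nonadjacent case.} Here $p\mid |A|$.
\begin{enumerate}[(i)]
\item If $k=n/2$, then $p=5$. The coset of $n/2$ is $\{n/2+\ell n/5\}$, which is inverse-closed, and all of $A$ is that coset. Hence $A=n/2+\langle n/5\rangle$, and connectivity forces $n=10$, giving $K_{5,5}$ with $w=2$.
\item If $k<n/2$ and $p=2$, then $A$ is a union of three cosets $\{a,a+n/2\}$. The pairs $\pm x$ must match up with $x+n/2$, which forces relations such as $i+k=n/2$ and $j+n/2\equiv -j$, i.e.\ $2j=n/2$. Alternatively, $A$ could be an entire subgroup coset $1+\langle 2\rangle$ with $n=12$, giving $K_{6,6}$.
\item If $k<n/2$ and $p=3$, then $A$ is two cosets $a+\langle n/3\rangle$ and $-a+\langle n/3\rangle$. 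The elements $\pm i,\pm j,\pm k$ reduced mod $n/3$ must then give two values negating each other, namely $\{i,\, i-n/3,\, i+n/3\}$ up to sign. Solving produces $i+j=n/3$ and $k=2i+j$.
\item If $k<n/2$ and $p=6$-type, i.e.\ $m=6$: this contains an order-2 or order-3 subgroup, so it is already covered by (ii) and (iii), which yield the $K_{6,6}$ case.
\end{enumerate}

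\textbf{Adjacent case.} Here $p\mid |A|+1\in\{6,7\}$.
\begin{enumerate}[(i)]
\item If $k=n/2$ (so $|A|+1=6$) and $p=2$, the coset $\{0,n/2\}$ is automatic. The remaining pairs $\{i,i+n/2\}$ then force $j=n/2-i$, giving $i+j=k=n/2$.
\item If $k=n/2$ and $p=3$, then $\langle n/3\rangle\subseteq A\cup\{0\}$, but $\pm n/3$ together with $n/2$ and one more pair cannot close up under cosets unless $n=6$, which is $K_6$. Similarly $p=6$ gives $K_6$.
\item If $k<n/2$, then $p=7$ and $A\cup\{0\}=\langle n/7\rangle$, and connectivity forces $n=7$, i.e.\ $K_7$.
\end{enumerate}

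The main obstacle is the case bookkeeping in the $p=2$ and $p=3$ nonadjacent cases. One must check all pairings of $\pm i,\pm j,\pm k$ into cosets, using $0<i<j<k<n/2$ to discard most matchings (for example $i+n/2=\pm i$ is impossible). After the ordering constraints are applied, one verifies that only the listed relations survive, and that each listed family indeed has twin classes equal to the cosets of exactly the stated $w$, i.e.\ that $w$ is maximal.
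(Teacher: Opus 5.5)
Your proposal is correct and is essentially the paper's argument: apply Proposition~\ref{prop:WhenNonadjTwins} and Corollary~\ref{cor:WhenAdjTwins}, restrict the coset size to divisors of $|A|$ or $|A|+1$, and match the sorted elements of $A$ against the elements of the candidate cosets. The only organizational difference is your reduction to prime-order subgroups. It is valid because a union of cosets of $H$ is a union of cosets of every subgroup \emph{contained in} $H$ (not ``containing'' it, as you wrote), and it makes the order-$6$ cases $K_{6,6}$ and $K_6$ appear as intersections of the order-$2$ and order-$3$ families rather than through the paper's direct matching.
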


\begin{proof} First assume $k = n/2$ and that $C_n(i,  j , k)$ has nonadjacent twins. 
Since $|A|=5$ is prime, $A$ must be a single nontrivial coset of some $w \in \mathbb Z_n$ of order $5$. 
If we choose $n/5 \in \mathbb Z$ to represent the congruency class $w \in \mathbb Z_n$, then the two integer sequences  \[0<i< i+\tfrac{n}{5}< i+\tfrac{2n}{5}< i+\tfrac{3n}{5}< i+\tfrac{4n}{5} <n \]
and 
\[0<i<j<k=n/2 <n-j < n-i <n\]
must match up.
Simple algebra gives $j= 3i$, $k = 5i$ and $n = 10i$. For this circulant graph to be connected, $1 = \gcd(n, i, j, k ) = i$, so it must be $C_{10}(1,3,5)$.

Now assume $C_n(i,j,k)$ has adjacent twins. By Corollary~\ref{cor:WhenAdjTwins}, 
$A \cup\{0\}$ is a union of cosets contianing the trivial coset. Since $|A\cup \{0\}|=6$, there are three cases to consider. If $A\cup \{0\}$ is a single trivial coset of  some $w \in \mathbb Z_n$ of order $6$, then 
we can represent $w$ with $\frac{n}{6}$.
A matching argument similar to the one above gives $j = 2i$, $k=3i$ and $n = 6i$;
the only such circulant graph that is connected is $C_6(1,2,3) = K_6$. (In this case, $w$ is the unit $1 \in U(6)$.)
If $A \cup \{0\}$ is a union of two cosets of some $w$  of order $3$, we get the same result.
Finally, if $A\cup \{0\}$ is a union of three cosets of some $w \in \mathbb Z_n$  of order $2$, then we can represent $w$ with $k = n/2$. Then $A = \{0, k\} \cup \{i, i+k\} \cup \{j, j+k\}$; since $A$ is inverse-closed, $i+j = k$. 

The proof in the case $k < n/2$ is similar and appears in the appendix.
\end{proof}

In \cite{S1964}, Sabidussi shows that the nonadjacent twin quotient graph of a graph Cayley graph is also a Cayley graph, which has no (distinct) nonadjacent twins; more precisely,
\[
\widetilde{X(G,A)} = X(G/T, A/T).
\]


The proposition below formulates this result in the context of circulant graphs. For clarity, the elements of $\mathbb Z_k$ are denoted by $[u]_k$, where $u \in \mathbb Z$.

\begin{prop}\label{prop:QuotientCirculantisCirculant}
Let $C_n(A)$ be a circulant graph with nonadjacent twin classes of size $0<t$.
Let $m = n/t$. 
Then its nonadjacent twin quotient graph  
is $C_m (A')$, where $A'=\{[a]_m: [a]_n \in A\}$. 
\end{prop}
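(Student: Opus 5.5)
By Proposition~\ref{prop:WhenNonadjTwins}, the nonadjacent twin classes of $C_n(A)$ are the cosets of a subgroup $T=\langle w\rangle$ of order $t$. The only subgroup of $\mathbb Z_n$ of order $t$ is $T=\langle [m]_n\rangle=\{[0]_n,[m]_n,\dots,[(t-1)m]_n\}$, where $m=n/t$. The plan is to identify the quotient $\mathbb Z_n/T$ with $\mathbb Z_m$ and then check that adjacency in $\widetilde{C_n(A)}$ matches adjacency in $C_m(A')$. This is just Sabidussi's identity $\widetilde{X(G,A)}=X(G/T,A/T)$ in coordinates, but I would verify it directly.

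First, I will show that the reduction map $\pi:\mathbb Z_n\to\mathbb Z_m$, $\pi([u]_n)=[u]_m$, is well defined, since $m\mid n$. It is a surjective homomorphism with kernel $\{[u]_n: m\mid u\}=T$. Hence the fibres of $\pi$ are exactly the cosets of $T$, which are the twin classes. So $[u]\mapsto [u]_m$ is a bijection from $V(\widetilde{C_n(A)})$ onto $\mathbb Z_m$.

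Next comes adjacency. By the definition of the twin quotient, $[u]$ and $[z]$ are adjacent iff some representatives are adjacent in $C_n(A)$. Equivalently, since twin classes share neighbourhoods, they are adjacent iff $u'-z'\in A$ for some $u'\in u+T$ and $z'\in z+T$, that is, iff $(u-z+T)\cap A\neq\emptyset$. Since $A$ is a union of cosets of $T$ (by the lemma preceding Proposition~\ref{prop:WhenNonadjTwins}), this is equivalent to $u-z+T\subseteq A$. Applying $\pi$, the condition becomes $[u-z]_m\in\pi(A)=A'$. Indeed, $[u-z]_m\in A'$ means $[u-z]_m=[a]_m$ for some $[a]_n\in A$, so $u-z\in a+T\subseteq A$; the converse is immediate.

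Finally, I will check that $A'$ is a valid generating set. It is inverse-closed because $A$ is and $\pi$ is a homomorphism. It also avoids $[0]_m$: otherwise some $a\in A$ would lie in $T$, and then $T\subseteq A$ would force $0\in A$, a contradiction. So $C_m(A')$ is well defined and the bijection above is a graph isomorphism.

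The main obstacle is the step where $A$ being a union of $T$-cosets is used. It ensures that membership in $A'$ does not depend on the chosen representative, and that adjacency in the quotient is exactly difference membership rather than only ``some representatives adjacent.'' The rest is routine.
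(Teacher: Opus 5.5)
Your proof is correct and takes essentially the paper's route: the paper gives no separate argument, simply invoking Sabidussi's identity $\widetilde{X(G,A)}=X(G/T,A/T)$ and reading $\mathbb Z_n/T$ as $\mathbb Z_m$. That identification is exactly what you verify in coordinates via the reduction map with kernel $T=\langle [m]_n\rangle$. Your use of the fact that $A$ is a union of $T$-cosets, both to make adjacency independent of representatives and to show $[0]_m\notin A'$, supplies the details the paper leaves to the citation.
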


\begin{ex}
In $C_9(\pm 1,\pm 3, \pm 4)$, each nonadjacent twin class has size $t=3$. Thus $m = 3$ and  
$A'  = \{[1]_3, [2]_3\} = \{ \pm [1]_3\}$.
Hence the nonadjacent twin quotient graph of $C_9(\pm 1,\pm 3, \pm 4)$ is $C_3(\pm 1) = C_3$.
\end{ex}

In \cite{CL2024}, Cockburn and Loeb show that all connected 3-valent and 4-valent circulant graphs with nonadjacent twins are arc-transitive, based on the characterization of such graphs by Poto\u{c}nik and Wilson~\cite{PW2020}.
As an alternate proof, Proposition~\ref{prop:QuotientCirculantisCirculant} implies that the twin quotient graph of a 3-valent or 4-valent circulant graph is a 2-valent or 1-valent circulant graph. 
Since such graphs are clearly arc-transitive, the result follows from Corollary~\ref{cor:TwinsArcTrans}.  However, circulant graphs  of higher valency with nonadjacent twins may not be arc-transitive.  An example of a 6-valent circulant graph with twins that is not arc-transitive is $C_{12}(\pm 2, \pm 3, \pm 4)$; its twin quotient graph is $C_6(\pm 2, 3)$, which is not arc-transitive.

\section{Co-Twins}\label{sec:CoTwins}

As shown in Section~\ref{subsec:TwinQuoGraph}, by alternatively passing to nonadjacent and adjacent twin quotient graphs as necessary, any vertex-transitive graph $G$ has a twin-free graph $G_0$ as a homomorphic image.
By 
Theorem~\ref{thm:DistTwins}, the problem of finding $\dist(G)$ can be reduced to finding $\dist(G_0)$. 
More generally, Proposition~\ref{prop:TwinsAutoQuotient} can be used to express $\aut(G)$ as a semidirect product of permutation groups and $\aut(G_0)$. 
The task of finding the distinguishing number and automorphism group of vertex-transitive, twin-free graphs can in some cases be further simplified by investigating a similar vertex relationship.

\begin{defn} Distinct vertices $u$ and $v$ in graph $G = (V,E)$ are 
{\it nonadjacent co-twins} iff $N[u] = \overline{N[v]}$ (equivalently, $N[u] \sqcup N[v] = V$),
   and {\it adjacent co-twins} iff $N(u) = \overline{N(v)}$ (equivalently, $N(u) \sqcup N(v) = V$).
\end{defn}

\begin{ex}
    The hypercube and icosahedral graph are vertex-transitive, twin-free with nonadjacent co-twins;   the envelope graph is vertex-transitive, twin-free with adjacent co-twins.  See Figure~\ref{fig:EZCoTwinsIso}, in which co-twins share the same color.
\end{ex}

\begin{figure}[h]
    \centering
    \includegraphics[width= 0.90\textwidth, center]{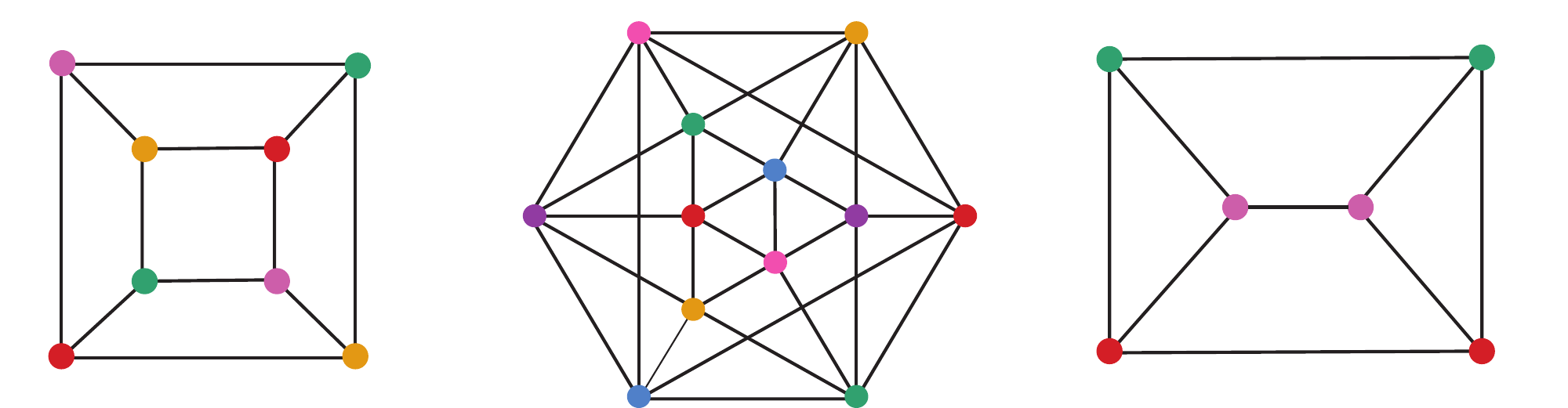}
    \caption{The hypercube, the icosahedral graph and the envelope graph.}
    \label{fig:EZCoTwinsIso}
\end{figure}


  \begin{lemma}\label{lem:adjNonadjCoTwins}
 \begin{enumerate}[(a)]
 \item A vertex cannot have both a nonadjacent and an adjacent co-twin.
\item Two vertices in a graph $G$ are nonadjacent co-twins in $G$ iff they are adjacent co-twins in the complement $\overline G$.
\end{enumerate}
\end{lemma}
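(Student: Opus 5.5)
We will follow the proof of Lemma~\ref{lem:adjNonadjTwins} closely. Part (a) works directly from the two definitions. Part (b) is a complementation identity relating open and closed neighborhoods in $G$ and $\overline G$.

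For (a), suppose a vertex $v$ has a nonadjacent co-twin $u$ and an adjacent co-twin $w$. Then $N[u]\sqcup N[v]=V$ and $N(v)\sqcup N(w)=V$.

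\begin{itemize}
\item The second condition gives $v\notin N(v)$, so $v\in N(w)$. Hence $w\in N(v)\subseteq N[v]$.
\item Since $N[u]$ and $N[v]$ are disjoint, $w\notin N[u]$. In particular $w\neq u$, and $u$ and $w$ are nonadjacent.
\item Now use the second condition at $u$. Since $u\neq v$ and $u\notin N[v]$ (because $u\in N[u]$), we have $u\notin N(v)$, so $u\in N(w)$.
\item This says $u$ and $w$ are adjacent, contradicting the previous step.
\end{itemize}

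The only care needed is tracking which vertex lies in which of the two partitions. We first locate $w$ using the partition $N(v)\sqcup N(w)$. We then test $u$ against that same partition, and the disjointness of $N[u]$ and $N[v]$ supplies the contradiction.

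For (b), we use the identity already noted in the proof of Lemma~\ref{lem:adjNonadjTwins}: $V\setminus N_G[x]=N_{\overline G}(x)$ for every vertex $x$. The key steps are as follows.

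\begin{itemize}
\item Suppose $u,v$ are nonadjacent co-twins in $G$, so $N_G[u]=V\setminus N_G[v]$. Taking complements gives $V\setminus N_G[u]=N_G[v]$, that is, $N_{\overline G}(u)=N_G[v]$.
\item Applying the identity to $v$ gives $N_G[v]=V\setminus N_{\overline G}(v)$. Hence $N_{\overline G}(u)=\overline{N_{\overline G}(v)}$, which says $u,v$ are adjacent co-twins in $\overline G$.
\item The converse follows by reversing each step. Alternatively, apply the argument with the roles of $G$ and $\overline G$ swapped, using $\overline{\overline G}=G$ and the matching identity $V\setminus N_{\overline G}(x)=N_G[x]$.
\end{itemize}

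No step poses a real obstacle. The only point requiring attention is the bookkeeping in (a), where the case $w=u$ must be ruled out as above.
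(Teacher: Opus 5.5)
Your proposal is correct and follows essentially the same argument as the paper: in (a) you use the partition $N(v)\sqcup N(w)=V$ to place $w\in N[v]$ and $u\in N(w)$, which contradicts the disjointness of $N[u]$ and $N[v]$, and in (b) you use the same complementation identity $V\setminus N_G[x]=N_{\overline G}(x)$. Your explicit derivation that $w$ is adjacent to $v$ (from $v\notin N(v)$) fills in a step the paper only asserts.
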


\begin{proof} 
Suppose $v \in V$ has  nonadjacent co-twin $u$ and adjacent co-twin $w$.
Then $N[u] \sqcup N[v] = V = N(w) \sqcup N(v)$. 
Because $w$ is adjacent to $v$,  $w \in N(v)\subset N[v]$.
Because $u$ is not adjacent to $v$, $u \notin N(v)$ and so $u \in N(w)$. Since $u$ is adjacent to $w$, $w$ is adjacent to $u$ and so $w \in N [u]$. This is a contradiction as $N[u]$ and $N[v]$ are disjoint.

For (b), from the proof of Lemma~\ref{lem:adjNonadjTwins}, $\overline{N_G(v)} = N_{\overline G}[v]$. By complementarity, this implies both 
$\overline{N_{\overline G}(u)} = N_G[u] $ and $\overline{N_G[v]} = N_{\overline{G}}(v)$.
Thus $N_G[u] = \overline{N_G[v]}$ iff $N_{\overline{G}}(v) = \overline{N_{\overline G}(u)}$.
\end{proof}

Lemma~\ref{lem:adjNonadjCoTwins} shows that in some ways, co-twins are similar to twins. An important difference is that being (nonadjacent or adjacent) co-twins is not an equivalence relation because it is neither reflexive nor transitive. However, there is an analog of the twin class of a vertex.

\begin{lemma}\label{lem:AtMostOneCoTwin}
    If $G$ has no adjacent (resp. nonadjacent) twins, then any vertex in $G$ has at most one nonadjacent (resp. adjacent) co-twin. 
\end{lemma}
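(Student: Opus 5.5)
We argue directly from the definitions. Suppose $G$ has no adjacent twins and that a vertex $v$ has two distinct nonadjacent co-twins $u$ and $w$. By definition, $N[u] = \overline{N[v]} = N[w]$, so $u$ and $w$ have the same closed neighborhood.

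The key step is to check that $u$ and $w$ are adjacent, since the definition of adjacent twins requires this. It is in fact automatic: $w \in N[w] = N[u]$ and $w \neq u$, so $w \in N(u)$. Hence $u$ and $w$ are distinct vertices with $N[u]=N[w]$, that is, adjacent twins. This contradicts the hypothesis, so $v$ has at most one nonadjacent co-twin.

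For the parenthetical case, suppose $G$ has no nonadjacent twins and $v$ has two distinct adjacent co-twins $u$ and $w$. Then $N(u) = \overline{N(v)} = N(w)$, so $u$ and $w$ are nonadjacent twins; again, nonadjacency is automatic, since $u \notin N(u) = N(w)$. This is a contradiction.

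Alternatively, the second case follows from the first by complementation. By Lemma~\ref{lem:adjNonadjTwins}(b), $G$ has no nonadjacent twins iff $\overline G$ has no adjacent twins. By Lemma~\ref{lem:adjNonadjCoTwins}(b), the adjacent co-twins of $v$ in $G$ are exactly the nonadjacent co-twins of $v$ in $\overline G$.

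There is no real obstacle here. The only point that needs care is confirming that the two co-twins have the correct adjacency status to count as twins of the stated type, and in both cases this follows immediately from equality of the relevant neighborhoods.
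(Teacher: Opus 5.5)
Your proof is correct and follows the same argument as the paper: two co-twins of $v$ both have the neighborhood complementary to that of $v$, so they are twins of the forbidden type. In the second case your chain $N(u)=\overline{N(v)}=N(w)$ is the correct one; the paper's text writes $N(v)=\overline{N(u)}=N(w)$ and ``$v$ and $w$'', which is a typo.
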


 \begin{proof} 
If $u$ and $w$ are both nonadjacent co-twins of $v$, then $N[u]= \overline{N[v]} = N[w]$. By definition, $u$ and $w$ are adjacent twins in $G$.  Similarly, if $u$ and $w$ are both adjacent co-twins of $v$, then  $N(v) = \overline{N(u)} = N(w)$ and so $v$ and $w$ are nonadjacent twins.
 \end{proof}

 If $G$ is vertex-transitive, twin-free, with nonadjacent co-twins, then $\overline G$ is vertex-transitive, twin-free with adjacent co-twins. Moreover $G$ and $\overline G$ have the same automorphism group and symmetry parameters. To simplify terminology, henceforth we focus only on nonadjacent co-twins,  calling them simply co-twins. 

\begin{cor}
 Let $G$ be a vertex-transitive, twin-free graph of order $n$ with 
 co-twins. Then $V(G)$ can be partitioned into pairs of 
 co-twins, implying that $n$ is even. Moreover, every vertex has degree $\frac{n}{2} - 1$.   
\end{cor}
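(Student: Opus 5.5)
The plan is to use vertex-transitivity to show that every vertex has a co-twin, then Lemma~\ref{lem:AtMostOneCoTwin} to show the co-twin is unique, then compute degrees from the disjoint-union condition.

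First, the assumption that $G$ has co-twins means some vertex $v_0$ has a (nonadjacent) co-twin $u_0$, so $N[u_0]\sqcup N[v_0]=V$. For any vertex $x$, vertex-transitivity gives $\alpha\in\aut(G)$ with $\alpha(v_0)=x$. Automorphisms preserve closed neighborhoods, $\alpha(N[y])=N[\alpha(y)]$, and preserve complements. Hence $\alpha(u_0)$ is a co-twin of $x$, and every vertex has at least one co-twin.

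Next, since $G$ is twin-free it has in particular no adjacent twins. By Lemma~\ref{lem:AtMostOneCoTwin}, each vertex $x$ has exactly one co-twin, which we denote $c(x)$. The relation is symmetric, because the condition $N[u]\sqcup N[v]=V$ is symmetric in $u$ and $v$. It is irreflexive, because $N[x]\cap N[x]\neq\emptyset$ (as $x\in N[x]$) and the definition requires distinct vertices. So $c$ is a fixed-point-free involution on $V$. Its orbits are pairs $\{x,c(x)\}$ that partition $V(G)$, and therefore $n$ is even.

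Finally, for the degree, fix $v$ and write $d=\deg v$, which is the same for all vertices by vertex-transitivity. Then $|N[v]|=|N[c(v)]|=d+1$. Since $N[v]\sqcup N[c(v)]=V$, we get $2(d+1)=n$, so $d=\frac n2-1$.

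The only step needing care is the transport of the co-twin property under automorphisms, which rests on the identity $\alpha(N[y])=N[\alpha(y)]$. The rest is immediate.
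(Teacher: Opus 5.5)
Your proof is correct and follows the route the paper intends. The paper gives no written proof of this corollary and treats it as an immediate consequence of vertex-transitivity and Lemma~\ref{lem:AtMostOneCoTwin}; your argument supplies exactly those omitted details: transporting a co-twin pair by an automorphism, uniqueness of the co-twin from the lemma, and the count $2(d+1)=n$ from $N[v]\sqcup N[c(v)]=V$.
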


\begin{ex}\label{ex:crown}
It is easy to verify that for all $k \ge 3$, $K_k \times K_2$ is vertex-transitive and twin-free. If we let $V(K_2) = \{0, 1\}$, then for any $u \in V(K_k)$, the vertices $(u,0)$ and $(u,1)$ are adjacent co-twins in $K_k \times K_2$. By Lemma~\ref{lem:adjNonadjCoTwins}, these vertices are nonadjacent co-twins in (vertex-transitive, twin-free) $\overline{K_k \times K_2}$.  Note that the envelope graph is $K_3 \times K_2$ and the hypercube is $\overline{K_4\times K_2}$. The graphs $\overline{K_k \times K_2}$ are sometimes called crown graphs. As Figure~\ref{fig:CrownGraphEg} illustrates, $\overline{K_k \times K_2}$ can also be described as $K_{k,k}$ minus a 1-factor.
\end{ex}

\begin{figure}[h]
    \centering
    \includegraphics[width= 0.35\textwidth, center]{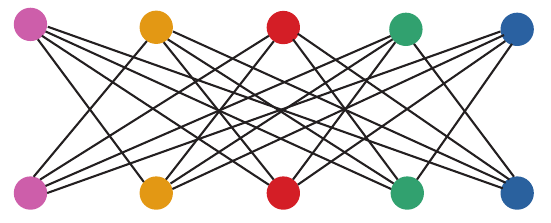}
    \caption{The crown graph $\overline{K_5 \times K_2}$.}
    \label{fig:CrownGraphEg}
\end{figure}

Assume the circulant graph $C_{2k}(A)$ is twin-free with 
co-twins.
By definition, the 
co-twin $v$  of $0$ satisfies\[
V = \mathbb Z_n = N[0] \sqcup N[v] = \{0\} \sqcup A \sqcup \{v\} \sqcup (v+A).
\]
 Since $A$ is inverse-closed, multiplying both sides of the equation above by $-1$ gives $\mathbb Z_n = N[0] \sqcup N[-v]$ and so 
$N[v] = N[-v]$. By the assumption that $C_n(A)$ is twin-free,
$v = -v$ in $\mathbb Z_{2k}$ and so $v = k$. Since translation by $u \in \mathbb Z_n$ is a graph automorphism, every pair of 
co-twins is of the form $\{u, u+k\}$.

\begin{lemma}\label{lem:NonadjCoTwins}
Twin-free $C_{2k}(A)$ has 
co-twins iff  $k \notin A$, $|A|= k-1$, and for all $a \in A$, $k+a\in \overline A$.
\end{lemma}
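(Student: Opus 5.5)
The plan is to prove both directions directly from the partition identity $\mathbb Z_{2k} = \{0\} \sqcup A \sqcup \{v\} \sqcup (v+A)$. The discussion just before the lemma shows that in a twin-free circulant graph with co-twins, the co-twin of $0$ must be $v = k$. By vertex-transitivity (translations are automorphisms), $C_{2k}(A)$ has co-twins iff $0$ and $k$ are co-twins. That is, the statement reduces to
\[
\mathbb Z_{2k} = \{0\} \sqcup A \sqcup \{k\} \sqcup (k+A),
\]
which says that $N[0]=\{0\}\cup A$ and $N[k]=\{k\}\cup(k+A)$ are complementary.

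\emph{Forward direction.} Assume the displayed disjoint-union identity holds. Then $k \notin A$, since $k\in N[k]$ and the two closed neighborhoods are disjoint. Counting gives $2k = 2+2|A|$, so $|A| = k-1$. For $a\in A$, the element $k+a$ lies in $N[k]$, hence not in $N[0]$. So $k+a\notin A$, and $k+a\neq 0$ because $a\neq k$ (equivalently $a \neq -k = k$). Therefore $k+a\in\overline A$.

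\emph{Converse.} Assume $k\notin A$, $|A|=k-1$, and $k+A\subseteq \overline A$. The main task is disjointness, after which counting finishes the argument. I will check the pairs in turn.
\begin{itemize}
\item $0\notin A$ by definition of a generator set, and $0\ne k$.
\item $0\notin k+A$, because $-k = k\notin A$.
\item $k\notin A$ by hypothesis.
\item $k\notin k+A$, because $0\notin A$.
\item $A\cap(k+A)=\emptyset$, because $k+A\subseteq\overline A$.
\item $|k+A| = |A|$, since translation is a bijection.
\end{itemize}
The four sets are therefore pairwise disjoint with total size $1+(k-1)+1+(k-1)=2k$. So they partition $\mathbb Z_{2k}$, i.e.\ $N[0]\sqcup N[k]=V$, and $0,k$ are co-twins. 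Translating by $u$ gives co-twins $u,u+k$ for every $u$.

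The only delicate point is the reduction to $v=k$ in the forward direction, which uses twin-freeness. This was already established in the paragraph preceding the lemma, so I would simply cite it. I expect no real obstacle; the converse does not even need twin-freeness.
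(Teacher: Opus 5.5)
Your proof is correct and follows the paper's argument: you use the discussion preceding the lemma to identify the co-twin of $0$ as $k$, read the three conditions off the partition $\mathbb Z_{2k} = \{0\}\sqcup A\sqcup\{k\}\sqcup(k+A)$, and for the converse establish disjointness of the four sets and count. Your explicit pairwise checks (for example, $0\notin k+A$ because $-k=k\notin A$) simply fill in what the paper compresses into ``combined with our usual assumptions.''
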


\begin{proof}
First assume that twin-free $C_{2k}(A)$ has 
co-twins.
As noted above, the 
co-twin of $0$ is $k$ and so
$
\mathbb Z_{2k} = N[0] \sqcup N[k] = \{0\} \sqcup A \sqcup\{k\} \sqcup (k+A).
$
Because all sets are disjoint, $k \notin A$ and $n = 2 + 2|A|$.  Hence $|A|= k-1$.
Moreover,  for all $a \in A$, $k+a \in k +A$ is in the complement of $A$ in $\mathbb Z_n \setminus\{0\}$, denoted $\overline A$ in this paper.


Conversely, assume $k \notin A$, $|A|= k-1$, and for all $a \in A$, $k+a\in \overline A$.
Combined with our usual assumptions that $0 \notin A$ and $A$ is inverse-closed, 
the sets $\{0\},  A, \{k\}$ and $k+A$ are disjoint.  
Simple enumeration shows that $|A|= k-1$ implies $\mathbb Z_{2k} = N[0] \sqcup N[k]$. By definition $C_{2k}(A)$ has 
co-twins.
\end{proof}

\begin{cor}\label{cor:OrderTwinFreeCoTwins}
    If $C_{2k}(A)$ is twin-free with 
    co-twins, then $k$ is odd.
\end{cor}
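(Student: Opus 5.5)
The plan is to argue by contradiction, using only the characterization in Lemma~\ref{lem:NonadjCoTwins}. Suppose $C_{2k}(A)$ is twin-free with co-twins. Then $k\notin A$, $|A|=k-1$, and $a\in A$ implies $k+a\notin A$. I would reorganize these conditions as a statement about pairs. The set $\mathbb Z_{2k}\setminus\{0,k\}$ is partitioned into the $k-1$ pairs $\{a,a+k\}$, with $a$ ranging over $1,\dots,k-1$. The condition $k+a\in\overline A$ for all $a\in A$ says that $A$ contains at most one element from each pair. Since $|A|=k-1$ equals the number of pairs, $A$ must contain \emph{exactly} one element from each pair $\{a,a+k\}$.

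Next I would bring in that $A$ is inverse-closed. Suppose, for contradiction, that $k$ is even. Then $k/2$ is an integer with $0<k/2<k$, so $\{k/2,\,3k/2\}$ is one of the pairs above. In $\mathbb Z_{2k}$ we have $-(k/2)=2k-k/2=3k/2$, so the two elements of this pair are additive inverses of each other. By inverse-closure, $A$ contains either both of them or neither. This contradicts the fact that $A$ contains exactly one element of every pair. Hence $k$ is odd.

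There is no real obstacle here. The only step needing care is the counting step that upgrades ``at most one from each pair'' to ``exactly one'', via $|A|=k-1$. Beyond that, one should check that $k/2$ differs from both $0$ and $k$, which holds because $k\ge 2$ when $k$ is even, so that $\{k/2,3k/2\}$ really is one of the $k-1$ pairs.
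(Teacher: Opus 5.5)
Your proof is correct, but it takes a different route from the paper's. The paper uses only three facts: $0,k\notin A$, $A=-A$, and $|A|=k-1$. The only solutions of $2x=0$ in $\mathbb Z_{2k}$ are $0$ and $k$, so negation is a fixed-point-free involution on $A$. Hence $|A|=k-1$ is even and $k$ is odd. The antipodal condition ($a\in A\Rightarrow k+a\notin A$) is never used.

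You instead use the full strength of Lemma~\ref{lem:NonadjCoTwins}. You first show that $A$ contains exactly one element from each antipodal pair $\{a,a+k\}$, which is equivalent to $A\sqcup(k+A)=\mathbb Z_{2k}\setminus\{0,k\}$. You then point to a concrete obstruction: when $k$ is even, the pair $\{k/2,3k/2\}$ is closed under negation, so no inverse-closed set can contain exactly one of its elements.

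The paper's parity count is shorter and needs weaker hypotheses. Your argument makes the transversal structure of $A$ explicit and locates the failure at a single pair rather than in a global count.
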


\begin{proof}
    Since $A= -1 \cdot A$, the unique additive inverse of each element of $A$ is also in $A$. By Lemma~\ref{lem:NonadjCoTwins}, $k\notin A$, so multiplication by $-1$ doesn't fix any element of $A$.  Thus  $|A| = k-1$ is even, forcing $k$ to be odd.
\end{proof}

\begin{ex}
    There are two non-isomorphic circulant graphs of order $ 10$ that are twin-free with 
    co-twins, namely $C_{10}(\pm 1, \pm 2)$ and $C_{10}(\pm 1, \pm 3)$. 
    (Cockburn and Loeb assert in \cite{CL2024} that $C_{10}(\pm 1, \pm 3)$ is the only two-generator circulant graph that is twin-free with co-twins; in fact, it is the only such graph that is also triangle-free.)
    There are three non-isomorphic circulant graphs of order $14$ that are  twin-free with 
    co-twins, namely  $C_{14}(\pm 1, \pm 2, \pm 3)$, $C_{14}(\pm 1, \pm 2, \pm 4)$ and  $C_{14}(\pm 1, \pm 3, \pm 5)$.     
\end{ex}

The following proposition shows the relationship between circulant graphs and the crown graphs of Example~\ref{ex:crown}

\begin{prop}\label{prop:WhenCirculantCrown}
    Let $3 \le k \in \mathbb N$. 
    If $k = 2\ell +1$, then $\overline{K_k \times K_2} \cong C_{2k}(A)$, where $A= \{\pm 1, \pm 3, \dots,  \pm (2\ell-1)\}$. If $k$ is even, then $\overline{K_k \times K_2}$ is not isomorphic to a circulant graph.
\end{prop}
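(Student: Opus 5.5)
For odd $k=2\ell+1$ I will give an explicit isomorphism. Let $A$ be the set of odd residues in $\mathbb Z_{2k}$ other than $k$; this set has $k-1$ elements, all odd and nonzero. Consider $C_{2k}(A)$. I will view the crown graph as $K_{k,k}$ minus a perfect matching, with parts $\{(u,0)\}$ and $\{(u,1)\}$ and $(u,0)$ adjacent to $(u',1)$ exactly when $u\neq u'$.

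Since $2k$ is even and every element of $A$ is odd, $C_{2k}(A)$ is bipartite, with the even residues forming one part and the odd residues the other. Each even vertex $x$ is adjacent to every odd vertex except $x+k$, because $k$ is the only odd residue missing from $A$. So $C_{2k}(A)$ is $K_{k,k}$ with the perfect matching $\{x,x+k\}$ removed, which is the crown graph. Explicitly, the map $(u,0)\mapsto 2u$ and $(u,1)\mapsto 2u+k$ (with $u\in\mathbb Z_k$, and using that $2$ is invertible mod $k$ so that $2u$ runs over the even residues) is the isomorphism. This agrees with Lemma~\ref{lem:NonadjCoTwins}, since $0$ and $k$ are co-twins. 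I also need to match the stated generator list. The elements $\pm1,\pm3,\dots,\pm(2\ell-1)$ are the odd numbers below $k$ together with their negatives $2k-(2i-1)$, and these are exactly the odd residues other than $k$. One caveat: $2\ell-1=k-2$, so the odd value $k$ itself is correctly excluded, and the count works out as $2\ell=k-1$.

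For even $k$, suppose $\overline{K_k\times K_2}\cong C_{2k}(A)$. I first need the crown graph to be twin-free. Two vertices on the same side have neighbourhoods that differ, because each misses a different vertex on the other side. Two vertices on opposite sides cannot be twins, since that would require equal neighbourhoods across a bipartition with $k\ge3$. Adjacent twins are impossible because the graph is triangle-free with degree at least $2$. Example~\ref{ex:crown} also asserts twin-freeness. The crown graph has nonadjacent co-twins $(u,0),(u,1)$, and twin-freeness and co-twins are both preserved by isomorphism. So $C_{2k}(A)$ is twin-free with co-twins, and Corollary~\ref{cor:OrderTwinFreeCoTwins} forces $k$ to be odd, a contradiction.

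The main obstacle is the bookkeeping needed to confirm that the listed generator set is exactly the set of odd residues other than $k$. The twin-free check for even $k$ is routine.
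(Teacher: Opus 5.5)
Your proof is correct. For odd $k$ it is the paper's argument made explicit: the odd residues generate $K_{k,k}$, and deleting the generator $k$ deletes the perfect matching $\{x,x+k\}$. Your map $(u,0)\mapsto 2u$, $(u,1)\mapsto 2u+k$ works. One small correction: $u\mapsto 2u$ is a bijection from $\mathbb Z_k$ onto the even residues of $\mathbb Z_{2k}$ for every $k$, so this has nothing to do with $2$ being invertible mod $k$.

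For even $k$ you take a genuinely different route.

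\begin{itemize}
\item The paper uses Hamiltonicity of crown graphs to assume $1\in A'$. It then notes that the odd degree $k-1$ forces the involution $k$ into $A'$, and exhibits the odd cycle $(0,1,\dots,k,0)$, contradicting bipartiteness.
\item You instead verify that the crown graph is twin-free with co-twins and invoke Corollary~\ref{cor:OrderTwinFreeCoTwins}. There, twin-freeness pins the co-twin of $0$ to $k$, so $k\notin A$ and $|A|=k-1$ must be even.
\end{itemize}

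Both arguments hinge on the parity of $|A|$ and the unique involution $k$ of $\mathbb Z_{2k}$.

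Your version stays inside the paper's co-twin framework and needs no external Hamiltonicity result. It also avoids the paper's ``without loss of generality $1\in A'$'', which does not follow from Hamiltonicity alone: $C_6(\pm 2,3)$ is Hamiltonian but has no unit generator. That step is better replaced by the cited fact that a connected bipartite circulant has only odd generators, which $k\in A'$ violates. In exchange, the paper's route needs nothing about twins, only degree parity and bipartiteness.
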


\begin{proof}
    As noted in the introduction, $C_{2k}(A) = K_{k,k}$ iff $A = \{1, 3, 5, \dots, 2k-1\}$. If $k$ is odd, then it is  contained in this set. Removing $k$ from the generator set amounts to removing a 1-factor from $K_{k,k}$ and $\overline{K_k \times K_2}$ is the same thing as $K_{k,k}$ minus a 1-factor.

    Next assume $k$ is even and $\overline{K_k \times K_2} \cong C_{2k}(A')$ for some $A' \subset \mathbb Z_{2k}\setminus \{0\}$. Crown graphs are known to be Hamiltonian \cite{LA1976}; in order for $C_{2k}(A')$ to be Hamiltonian, we can assume WLOG that $1 \in A'$. Every vertex in $\overline{K_k \times K_2}$ has degree $k-1$ which is odd, implying that $|A'|$ must be odd. Since $A'$ is inverse-closed, $k \in A'$. Thus $C_{2k}(A')$ contains the odd cycle $(0, 1, 2, \dots, k, 0)$. However, $\overline{K_k \times K_2}$ is bipartite, a contradiction.
\end{proof}

\subsection{Co-Twin Quotient Graph}

Throughout this subsection, we assume $G$ is a graph of order $n = 2k$ that is vertex-transitive, twin-free, with  
co-twins. This implies $k \ge 3$. Although the relation of being 
co-twins is not an equivalence relation, we can still form a co-twin quotient graph  by collapsing each 
co-twin pair $\{u, v\}$ into a single vertex denoted by $[u,v]$ and adding an edge between collapsed pairs $[u,v]$ and $[x,y]$ iff a vertex in $\{u,v\}$ is adjacent in $G$ to a vertex in $\{x,y\}$.

\begin{lemma}\label{lem:K2UK2} 
 If $\{u,v\}$ and $\{x,y\}$ are distinct pairs of 
 co-twins, then the induced subgraph $G[u,v,x,y]$ is $K_2 \cup K_2$.   
\end{lemma}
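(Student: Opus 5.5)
Since $u,v$ are co-twins, $N[u]\sqcup N[v]=V$, so the closed neighborhoods partition the vertex set. The same holds for $x,y$. Within each pair there is no edge: $v\notin N[u]$ because $N[u]$ and $N[v]$ are disjoint and $v\in N[v]$. So the only possible edges of $G[u,v,x,y]$ join $\{u,v\}$ to $\{x,y\}$. We must show these form a perfect matching.

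The plan is as follows. Since $x\neq u,v$ and $x\in V=N[u]\sqcup N[v]$, the vertex $x$ lies in exactly one of $N(u)$ and $N(v)$. Thus $x$ is adjacent to exactly one of $u,v$, and likewise $y$ is adjacent to exactly one of $u,v$. It remains to rule out $x$ and $y$ being adjacent to the same vertex of $\{u,v\}$. Suppose, say, that both are adjacent to $u$. Applying the same reasoning to the pair $\{x,y\}$ and the vertex $u\notin\{x,y\}$, the vertex $u$ is adjacent to exactly one of $x,y$. This contradicts $u$ being adjacent to both. The case where both are adjacent to $v$ is identical.

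Hence $x$ and $y$ are matched to different vertices of $\{u,v\}$. The edge set of $G[u,v,x,y]$ therefore consists of exactly two disjoint edges, which is $K_2\cup K_2$. There is no serious obstacle; the only care needed is to note that the four vertices are distinct, which holds because the pairs are distinct and each vertex has at most one co-twin by Lemma~\ref{lem:AtMostOneCoTwin}, so distinct pairs are disjoint. Twin-freeness is used only there.
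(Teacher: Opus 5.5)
Your proof is correct and follows essentially the same route as the paper: both place $x$ in exactly one of $N(u), N(v)$, then use the disjointness of $N[x]$ and $N[y]$ to force $y$ onto the other vertex of $\{u,v\}$. You are more explicit than the paper on two points it leaves implicit: that co-twins are nonadjacent, and that distinct co-twin pairs are disjoint (via Lemma~\ref{lem:AtMostOneCoTwin} and twin-freeness).
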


\begin{proof}
Since $x \in V(G) = N[u] \sqcup N[v]$, we can assume without loss of generality that $x \in N[u]$. Because these are distinct pairs, $x \in N(u)$.  Since $N[x]$ and $N[y]$ are disjoint, $y$ cannot also be adjacent to $u$ and so $y \in N(v)$.
\end{proof}

\begin{figure}[h]
    \centering
    \includegraphics[width= 0.32\textwidth, center]{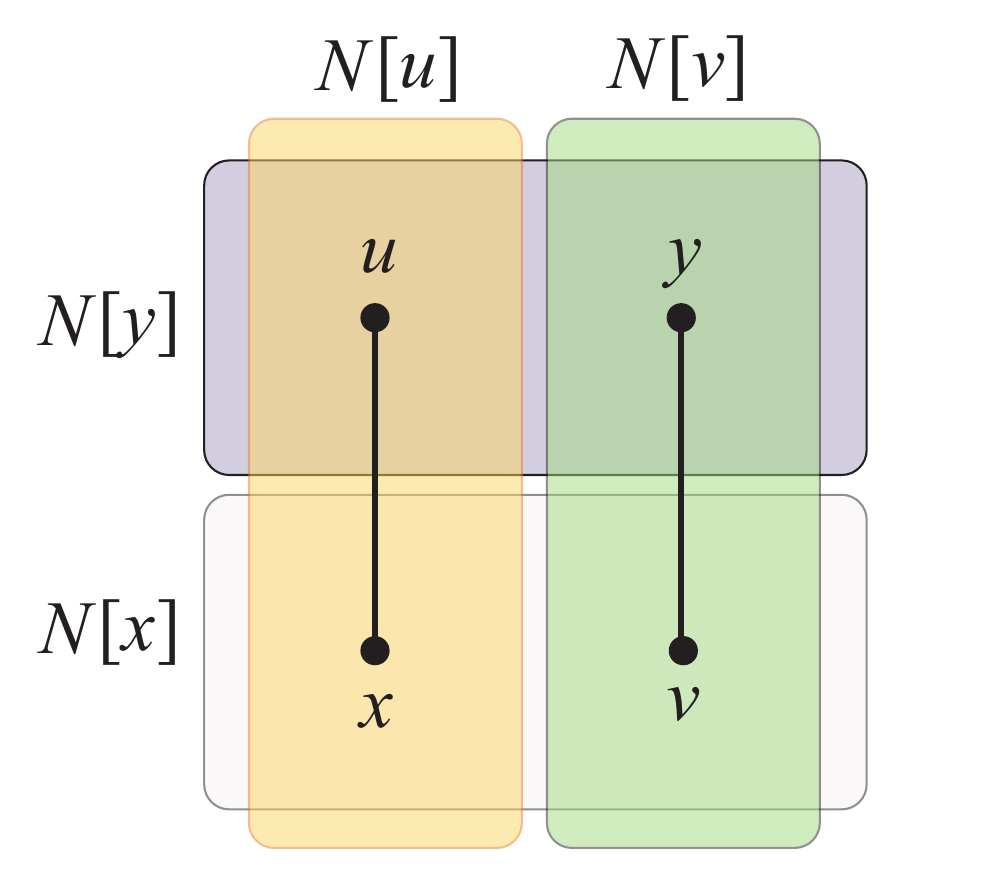}
    \caption{Subgraph induced by 
    co-twin pairs $\{u,v\}$ and $\{x,y\}$.}
    \label{fig:K2UK2}
\end{figure}

By this lemma, the co-twin quotient graph is simply $K_k$. 
Thus the function $G \to K_k$ that collapses each 
co-twin pair into a single vertex is by definition a surjective graph homomorphism.

As with twin vertices, this leads to a relationship between the automorphism groups of the original graph and the quotient graph.
Any $\alpha \in \aut(G)$ must take 
co-twins to 
co-twins, so $\alpha$ induces a permutation $\alpha^*$  of the collapsed vertices, given by 
$\alpha^*([u,v]) = [\alpha(u), \alpha(v)]$. Since any permutation of the vertices of a complete graph is a graph automorphism, this defines a group homomorphism $\kappa: \aut(G) \to \aut(K_k) = S_k$.

From subsection~\ref{subsec:TwinQuoGraph}, the subgroup generated by all twin transpositions consists of automorphisms in which sets of mutually twin vertices are permuted independently; this subgroup is the kernel of the projection of $\aut(G)$ to $\aut(\widetilde G)$.
The next result shows that co-twin pairs cannot be transposed independently.

\begin{lemma}\label{lem:kerKappa}
Let $\beta:V(G) \to V(G)$ be the function that transposes the vertices of every 
co-twin pair. Then the kernel of $\kappa:\aut(G) \to S_k$ is $\langle\beta\rangle$.
\end{lemma}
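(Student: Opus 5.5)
We need to show two things: that $\beta$ is an automorphism lying in $\ker\kappa$, and that every element of $\ker\kappa$ is either the identity or $\beta$. Throughout, $G$ has order $2k$ with $k\ge 3$, and for each vertex $u$ we write $u'$ for its unique co-twin; uniqueness comes from Lemma~\ref{lem:AtMostOneCoTwin}, since $G$ is twin-free.

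\textbf{Step 1: $\beta$ is an automorphism.} By definition $\beta(u)=u'$, and $\beta$ is an involution that fixes every co-twin pair setwise, so $\beta^*$ is the identity and $\beta\in\ker\kappa$ once we know it is an automorphism. Take distinct vertices $u,x$.
\begin{itemize}
\item If $x=u'$, then $u$ and $x$ are nonadjacent, and so are $\beta(u)=u'$ and $\beta(x)=u$.
\item Otherwise $\{u,u'\}$ and $\{x,x'\}$ are distinct co-twin pairs. By Lemma~\ref{lem:K2UK2} the induced subgraph on these four vertices is $K_2\cup K_2$, and since $N[x]\cap N[x']=\emptyset$ the two edges form a perfect matching between the pairs. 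Hence either $ux,u'x'\in E$ or $ux',u'x\in E$, and in both cases $ux\in E$ iff $u'x'\in E$.
\end{itemize}
So $\beta$ preserves adjacency and non-adjacency, giving $\langle\beta\rangle\le\ker\kappa$.

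\textbf{Step 2: every kernel element is $\mathrm{id}$ or $\beta$.} Let $\alpha\in\ker\kappa$. Then for each pair, $\alpha$ either fixes both $u$ and $u'$ or swaps them. Suppose $\alpha$ fixes $u$ and $u'$, and let $\{x,x'\}$ be any other pair. Relabel so that $ux\in E$, and hence $u'x'\in E$, with $ux'\notin E$. Since $\alpha$ fixes $u$ and preserves edges, $u\alpha(x)\in E$, while $\alpha(x)\in\{x,x'\}$. Because $ux'\notin E$, this forces $\alpha(x)=x$, and then $\alpha(x')=x'$.

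So if $\alpha$ fixes one pair, it fixes every vertex and $\alpha=\mathrm{id}$. Otherwise $\alpha$ swaps every pair, and $\alpha=\beta$. Therefore $\ker\kappa=\{\mathrm{id},\beta\}=\langle\beta\rangle$, which has order $2$ since $\beta$ has no fixed points.

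The main point needing care is the perfect-matching structure from Lemma~\ref{lem:K2UK2}. The lemma's proof gives $x\in N(u)$ and $y\in N(v)$, and we must also rule out $x\sim v$ and $y\sim u$. This follows because $N[u]\cap N[v]=\emptyset$ and $N[x]\cap N[y]=\emptyset$. Everything else is direct.
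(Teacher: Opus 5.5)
Your proof is correct and is essentially the paper's argument: every kernel element fixes or swaps each co-twin pair, and the choice is forced to be the same for every pair. You prove the contrapositive direction (a fixed vertex pins down every other pair through the $K_2\cup K_2$ matching), whereas the paper shows that swapping one pair makes $\alpha$ fixed-point-free because $\alpha(N[u])=N[v]$ and $V=N[u]\sqcup N[v]$; you also write out the check that $\beta\in\aut(G)$, which the paper leaves as easy.
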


\begin{proof}
It is easy to verify that $\beta \in \aut(G).$ If $\alpha \in \ker(\kappa)$, then for each 
co-twin pair $\{u,v\}$, either $\alpha$ fixes both $u$ and $v$ or $\alpha$ transposes $u$ and $v$. In the latter case, $\alpha(N[u]) = N[v]$  and $\alpha(N[v]) = N[u]$. Since $N[u]$ and $N[v]$ are disjoint,  $\alpha$ cannot fix any vertex.  Hence if $\alpha$ transposes vertices in one 
co-twin pair, it must transpose the vertices of every 
co-twin pair and so $\alpha = \beta$.
\end{proof}

\begin{prop}\label{prop:Surjective}
Let $G$ be a graph of order $2k$ that is vertex-transitive and twin-free with 
co-twins. Then $\kappa: \aut(G) \to S_k$ is surjective iff $G$ is triangle-free.
\end{prop}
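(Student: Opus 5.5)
The plan is to prove the two directions separately. Throughout, I use Lemma~\ref{lem:K2UK2}: for two distinct co-twin pairs, the edges between them form a perfect matching. Consequently each vertex $u$ is adjacent to exactly one vertex of every co-twin pair other than its own. So $N(u)$ consists of one representative from each of the other $k-1$ pairs, and $N(v)$, for the co-twin $v$ of $u$, consists of the remaining representatives.

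\emph{Triangle-free implies surjective.} I would show that $G$ must be the crown graph $\overline{K_k\times K_2}$. Fix a co-twin pair $\{u,v\}$ and take $x\in N(u)$ with co-twin $y\in N(v)$.
\begin{itemize}
\item Since $G$ is triangle-free, $x$ has no neighbour in $N(u)$.
\item Since co-twins are nonadjacent, $x$ is not adjacent to $y$ or to $v$.
\item Hence $N(x)\subseteq \{u\}\cup (N(v)\setminus\{y\})$. This set has size $1+(k-2)=k-1=\deg x$, so equality holds.
\end{itemize}
By symmetry, $N(u)$ and $N(v)$ are both independent, and each $x\in N(u)$ is adjacent to every vertex of $N(v)$ except its own co-twin. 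Therefore $G$ is bipartite with parts $\{u\}\cup N(v)$ and $\{v\}\cup N(u)$, and the non-edges between the parts are exactly the co-twin pairs. That is, $G$ is $K_{k,k}$ minus the 1-factor of co-twin pairs. Labelling the vertices $(i,0),(i,1)$ for $i=1,\dots,k$, one for each pair, any $\sigma\in S_k$ lifts to the automorphism $(i,\epsilon)\mapsto(\sigma(i),\epsilon)$. So $\kappa$ is surjective.

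\emph{Surjective implies triangle-free.} Assume $\kappa$ is onto and fix $u$ with co-twin $v$. Let $\alpha$ be any preimage of a permutation in $S_k$ fixing the pair $[u,v]$. Then either $\alpha$ or $\beta\alpha$ fixes $u$, where $\beta$ is from Lemma~\ref{lem:kerKappa}. So $\stab(u)$ maps onto the full symmetric group on the other $k-1$ pairs. Since $N(u)$ contains exactly one vertex from each of these pairs, $\stab(u)$ acts on $N(u)$ as the full symmetric group $S_{k-1}$. The induced subgraph $G[N(u)]$ is invariant under $\stab(u)$, and $k-1\ge 2$, so this subgraph is either empty or complete.

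It remains to exclude the complete case. If $N(u)$ is a clique, then any $x\in N(u)$ is adjacent to $u$ and to the other $k-2$ vertices of $N(u)$. That is already $k-1=\deg x$ neighbours, so $N[x]=N[u]$. Then $x$ and $u$ are adjacent twins, contradicting twin-freeness. Hence $N(u)$ is independent for every $u$, which means $G$ is triangle-free.

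The main obstacle is the first direction: pinning down the global structure from the local triangle-free condition. I expect the degree count $\deg x = k-1$ combined with Lemma~\ref{lem:K2UK2} to force it as sketched above. The second direction hinges on the observation that $\stab(u)$ acts as the full $S_{k-1}$ on $N(u)$, which uses the kernel description from Lemma~\ref{lem:kerKappa}.
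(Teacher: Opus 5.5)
Your proof is correct, but it takes a different route from the paper in both directions.

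\textbf{Triangle-free implies surjective.} The paper never identifies $G$. It defines the lift of $\sigma$ directly: every pair fixed by $\sigma$ is fixed pointwise, and each vertex of a moved pair goes to the \emph{nonadjacent} vertex of the target pair. It then checks adjacency preservation in three cases using triangle-freeness. You first prove that $G$ is $K_{k,k}$ minus the co-twin 1-factor, which is the content of the paper's later Proposition~\ref{prop:OnlyCrowns}, and then the lift is obvious. Your lift agrees with the paper's, because the nonadjacent vertex of another pair is the one on the same side of the bipartition. Your route has three advantages:
\begin{enumerate}
\item It makes Corollary~\ref{cor:AutCoTwins} essentially immediate.
\item It absorbs Proposition~\ref{prop:OnlyCrowns} into this proof.
\item It avoids the paper's case analysis. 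Case~3 there does not really cover the subcase $\sigma([u,v])=[w,z]$: then $y=z$ is the co-twin of $w$, so the claim ``$w$ is adjacent to $y$'' fails, although the conclusion still follows from Lemma~\ref{lem:K2UK2}.
\end{enumerate}

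\textbf{Surjective implies triangle-free.} The paper argues by contradiction from a triangle $h,j,\ell$. It shows the transposition of $[h,i]$ and $[j,k]$ has no preimage, using two facts: there is a vertex $p$ adjacent to $h$ but not to $j$ (since there are no adjacent twins), and no automorphism can fix one co-twin pair pointwise while swapping another. Your argument is more structural. Surjectivity together with $\beta$ makes $\stab(u)$ act as the full symmetric group on $N(u)$, so $G[N(u)]$ is empty or complete. The degree count then turns the complete case into an adjacent twin. The paper's version names an explicit permutation outside the image of $\kappa$ whenever $G$ has a triangle. Yours shows that it is already enough for the image to contain the full stabilizer of one pair.

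\textbf{Minor wording.} $x\notin N(v)$ follows from $N[u]\cap N[v]=\emptyset$, or from your opening remark that each vertex has exactly one neighbour in every other pair. It does not follow from co-twins being nonadjacent.
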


\begin{proof}

First assume $G$ is triangle-free.  Let $\sigma \in S_k$; we must define $\alpha \in \aut(G)$ such that $\kappa(\alpha) = \alpha^* = \sigma$.  
Let $[u,v]$ be a collapsed 
co-twin pair.
If $\sigma$ fixes $[u,v]$, then let $\alpha(u) = u$ and $\alpha(v) = v$.
If $\sigma([u,v]) = [x,y]$, where $[x,y]$ is a different collapsed 
co-twin pair, then by Lemma~\ref{lem:K2UK2}, the induced subgraph $G[u,v,x,y]$ is $K_2 \cup K_2$.
Without loss of generality,  
assume $u,x$ are adjacent and $v,y$ are adjacent, as in Figure~\ref{fig:K2UK2}. In this case, let $\alpha(u) = y$ and $\alpha(v) = x$; that is, if $\sigma$ moves $[u,v]$ to $[x,y]$, $\alpha$ takes each vertex in $\{u, v\}$ to the nonadjacent vertex in $\{x, y\}$.

Certainly $\alpha$ is a bijection on $V(G)$. 
To show $\alpha \in \aut(G)$, let $u, w \in V(G)$.
If $u$ and $w$ are adjacent, then they cannot be (nonadjacent) co-twins. Let $[u,v]$ and $[w,z]$ be the distinct 
co-twin pairs containing these vertices.
By Lemma~\ref{lem:K2UK2}, $v$ and $z$ are adjacent.

\medskip

\noindent {\bf Case 1.} If $\sigma$ fixes both $[u,v]$ and $[w,z]$, then $\alpha$ fixes both $u$ and $w$ and thus $\alpha(u)$ and $\alpha(w)$ are adjacent.

\medskip

\noindent {\bf Case 2.} Assume $\sigma([u,v]) = [x,y]$ but $\sigma$ fixes $[w,z]$. Assuming $u$ is adjacent to $x$ and $v$ is adjacent to $y$, by definition $\alpha(u) = y$ and $\alpha(w) = w$.
Since $G$ is triangle-free, $w$ cannot be adjacent to $x$, and so by Lemma~\ref{lem:K2UK2}, $w = \alpha(w)$ is adjacent to $y=\alpha(u)$.

\medskip 

\noindent {\bf Case 3.} Assume $\sigma([u,v]) = [x,y]$ and $\sigma([w,z]) = [r,s]$, where $u$ is adjacent to $x$ and $w$ is adjacent to $r$. 
Because $\sigma$ is injective, $[x,y] \neq [r,s]$. 
Then by definition $\alpha(u) = y$ and $\alpha(w) = s$.
As in the previous case, since $G$ is triangle-free and $u$ is adjacent to $w$,  $w$ is adjacent to $y$.
Since  $w$, $y$ and $r$ cannot induce a triangle in $G$, 
$y = \alpha(u)$ is adjacent to $s = \alpha(w)$.

Since $\alpha$ is an adjacency-preserving bijection on the vertices of a finite graph, $\alpha$ is an automorphism.
Clearly, $\kappa(\alpha) = \sigma$ and so $\kappa$ is surjective.

\medskip
Next assume that $h, j,\ell \in V(G)$ induce a triangle in $G$. Let $i,k,m$ be their respective 
co-twins; by Lemma~\ref{lem:K2UK2}, they also induce a triangle in $G$. Let $\tau \in S_k$ be the permutation that interchanges collapsed $[h,i]$ and $[j,k]$ but leaves all other vertices fixed. Assume $\kappa(\gamma) = \tau$ for some $\gamma \in \aut(G)$. 

By composing $\gamma$ with $\beta$ if necessary, we can assume that $\gamma$ fixes both $\ell$ and $m$. Since $\gamma$ respects adjacency, $\gamma(h) = j$ and $\gamma(i) = k$. Since $G$ has no adjacent twins, there is some vertex $p$ that is adjacent to $h$ but not to $j$. Let $q$ be the 
co-twin of $p$.

Since $\tau$ fixes $[p,q]$, $\gamma$ either fixes or interchanges $p$ and $q$. Since $p$ and $h$ are adjacent in $G$, $\gamma(p)$ and $\gamma(h) = j$ must be adjacent, so $\gamma(p) = q.$ By Lemma~\ref{lem:K2UK2}, $\{\ell, m\}$ and $\{p,q\}$ induce $K_2 \cup K_2$. Thus $\gamma$ cannot fix the vertices in the first 
co-twin pair and interchange them in the second.
\end{proof}

\begin{cor}\label{cor:AutCoTwins}
    Let $G$ be a graph of order $2k$ that is vertex-transitive, twin-free, with 
    co-twins. If $G$ is triangle-free, then $\aut(G) = S_k \times S_2$. 
\end{cor}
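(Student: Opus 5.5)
We combine Lemma~\ref{lem:kerKappa} with Proposition~\ref{prop:Surjective}. Since $G$ is triangle-free, $\kappa:\aut(G)\to S_k$ is surjective. By Lemma~\ref{lem:kerKappa} its kernel is $\langle\beta\rangle\cong S_2$, because $\beta$ is a nontrivial involution ($\beta$ moves every vertex). This gives a short exact sequence $1\to\langle\beta\rangle\to\aut(G)\to S_k\to 1$, so $|\aut(G)|=2\cdot k!$. It remains to show the extension splits as a \emph{direct} product.

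\textbf{Step 1: $\beta$ is central.} For any $\alpha\in\aut(G)$ and any co-twin pair $\{u,v\}$, $\alpha$ maps co-twins to co-twins. Hence $\alpha(v)$ is the co-twin of $\alpha(u)$, which by Lemma~\ref{lem:AtMostOneCoTwin} is unique. Therefore $\alpha\beta(u)=\alpha(v)=\beta\alpha(u)$, so $\beta\in Z(\aut(G))$.

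\textbf{Step 2: a complement.} Let $H$ be the set of automorphisms arising as in the surjectivity construction of Proposition~\ref{prop:Surjective}. Concretely, I would describe them intrinsically using the bipartition. A triangle-free, vertex-transitive, twin-free graph with co-twins is a crown graph, so I expect to first show $G$ is bipartite. The construction there sends each vertex to the vertex \emph{nonadjacent} to its naturally paired partner, which suggests $G\cong\overline{K_k\times K_2}$ with classes $X=\{x_1,\dots,x_k\}$ and $Y=\{y_1,\dots,y_k\}$, where $x_i,y_i$ are co-twins. To verify this, fix a pair $\{u,v\}$ and set $X=\{u\}\cup N(v)$ and $Y=\{v\}\cup N(u)$. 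Triangle-freeness makes $N(u)$ and $N(v)$ independent sets. Lemma~\ref{lem:K2UK2} then forces each co-twin pair to split with one vertex in $X$ and one in $Y$, and forces all edges to run between $X$ and $Y$ except within a pair. Then $H=\{\alpha\in\aut(G):\alpha(X)=X\}$ is a subgroup. It meets $\langle\beta\rangle$ trivially because $\beta$ swaps $X$ and $Y$. It maps isomorphically onto $S_k$ since each $\sigma\in S_k$ lifts via $x_i\mapsto x_{\sigma(i)}$, $y_i\mapsto y_{\sigma(i)}$. So $\aut(G)=H\langle\beta\rangle$ with $\beta$ central and $H\cap\langle\beta\rangle=1$, giving $\aut(G)\cong S_k\times S_2$.

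\textbf{Main obstacle.} The main obstacle is Step 2: pinning down a complement to $\langle\beta\rangle$ cleanly. If the bipartition argument is awkward, I would fall back on the abstract route. Central extensions of $S_k$ by $\mathbb Z_2$ that are split are direct products, so it suffices to exhibit any section $S_k\to\aut(G)$. Equivalently, it suffices to check that each transposition in $S_k$ lifts to an involution in $\aut(G)$ and that these lifts satisfy the Coxeter relations. The explicit $X$-preserving lifts satisfy these relations automatically.
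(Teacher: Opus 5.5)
Your proof is correct and follows the paper's route: the same exact sequence coming from Lemma~\ref{lem:kerKappa} and Proposition~\ref{prop:Surjective}, and the same complement $H$. In the crown structure, the paper's lifts (``send each vertex to the nonadjacent vertex of the target pair'') are exactly your $X$-preserving automorphisms. The differences are only in the verification steps. You obtain normality of $\langle\beta\rangle$ as centrality directly from uniqueness of co-twins, rather than through the paper's index-$2$ argument. And because you first establish the bipartite crown structure (the content of Proposition~\ref{prop:OnlyCrowns}, which does not depend on this corollary), $H$ becomes a set-stabilizer, so closure under composition is automatic instead of requiring the paper's triangle-free check.
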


\begin{proof} 
By Lemma~\ref{lem:kerKappa}, Proposition~\ref{prop:Surjective}  and the First Isomorphism Theorem, $\aut(G)/\langle \beta \rangle \cong S_k$. Since $\langle \beta \rangle \cong \mathbb Z_2$, $|\aut(G)| = 2\cdot |S_k| = 2(k!)$.

Let $H$ be the set of automorphisms of $G$ used in the proof of Proposition~\ref{prop:Surjective}. More precisely, $\alpha \in  H$ iff whenever $\alpha$ fixes 
    co-twin pair $\{u,v\}$ set-wise, $\alpha$ fixes $\{u,v\}$ point-wise, and whenever $\alpha(\{u,v\}) = \{x,y\}$ for some $\{x,y\} \neq \{u,v\}$, then $\alpha$ takes each vertex in $\{u,v\}$ to the nonadjacent vertex in $\{x,y\}$.

Clearly $H$ contains the identity and is closed under inverses. Let $\alpha, \beta \in H$. To verify that $\beta \alpha \in H$, it suffices to consider the case 
 $\alpha(\{u,v\}) = \{x,y\}$ and $\beta(\{x,y\}) = \{w,z\}$ for $\{x,y\} \neq \{u,v\}$ and $\{w,z\} \neq \{x,y\}$. 
Assuming $u$ is adjacent to $x$ and $x$ is adjacent to $w$,  $\alpha(u) = y$ and $\beta(y) = w.$ 
Because $G$ is triangle-free,  $u$ is not adjacent $w$ and so $\beta \alpha$ takes each vertex in $\{u, v\}$ to the nonadjacent vertex in $\{w, x\}$.
Thus $H$ is a subgroup of $\aut(G)$ and  $H \cap \langle b \rangle$ is trivial. Moreover, the proof of Proposition~\ref{prop:Surjective} establishes a one-to-one correspondence between the elements of $S_k$ and $H$, meaning $|H| = |S_k| = k!$. Hence $H$ is a subgroup of $\aut(G)$ of index $2$ and so is normal. As a kernel, $\langle b \rangle$ is also normal. Hence $\aut(G)$ is the direct product of these two subgroups.
\end{proof}

\begin{prop}\label{prop:OnlyCrowns}
 A vertex-transitive, twin-free graph with 
 co-twins is triangle-free iff it is a crown graph $\overline{K_k \times K_2}$  for some $k \ge 3$.    
\end{prop}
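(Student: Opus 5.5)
We prove the two directions separately. The forward direction is a direct structural argument from Lemma~\ref{lem:K2UK2}; the reverse direction is a verification on the crown graph.

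\emph{Forward direction.} Let $G$ have order $2k$ and be vertex-transitive, twin-free, with co-twins, and triangle-free. Label the co-twin pairs $\{u_i,v_i\}$ for $i=1,\dots,k$.

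By Lemma~\ref{lem:K2UK2}, for $i\neq j$ the subgraph induced on $\{u_i,v_i,u_j,v_j\}$ is $K_2\cup K_2$. So exactly one of $u_i$, $v_i$ is adjacent to $u_j$, and the other is adjacent to $v_j$.

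Fix $i=1$. We normalize the labels using the first pair: for every $j\ge 2$, name the pair $\{u_j,v_j\}$ so that $u_1v_j$ and $v_1u_j$ are edges. Then
\[
N(u_1)=\{v_2,\dots,v_k\}, \qquad N(v_1)=\{u_2,\dots,u_k\}.
\]
Set $U=\{u_1,\dots,u_k\}$ and $W=\{v_1,\dots,v_k\}$.

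Now take $i,j\ge 2$ with $i\ne j$.
\begin{itemize}
\item The vertices $v_i$ and $v_j$ are both neighbours of $u_1$. Triangle-freeness forbids the edge $v_iv_j$.
\item The vertices $u_i$ and $u_j$ are both neighbours of $v_1$. Triangle-freeness forbids the edge $u_iu_j$.
\item By Lemma~\ref{lem:K2UK2}, the only possible edges between the two pairs are $u_iu_j, v_iv_j$ or $u_iv_j, v_iu_j$. The first option is excluded, so $u_iv_j$ and $v_iu_j$ are edges.
\end{itemize}

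Hence $U$ and $W$ are independent sets. Every $u_i$ is adjacent to every $v_j$ with $j\neq i$, and $u_i$ is not adjacent to its co-twin $v_i$. This is exactly $K_{k,k}$ minus a perfect matching, which is $\overline{K_k\times K_2}$. Finally $k\ge 3$, as already noted at the start of the subsection.

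The main point requiring care is the labeling step. We must check that the orientation chosen relative to pair $1$ is consistent for every other pair. It is, because that choice is forced separately for each $j$ and then determines all the remaining edges.

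\emph{Reverse direction.} By Example~\ref{ex:crown}, the crown graph is vertex-transitive and twin-free with co-twins. It is bipartite, with parts $V(K_k)\times\{0\}$ and $V(K_k)\times\{1\}$ as seen in $K_{k,k}$, so it is triangle-free.

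As a check on the vertex-transitivity claim, it can also be seen directly. The maps $\sigma\times\mathrm{id}$ for $\sigma\in S_k$, together with the swap of the two sides, are automorphisms that act transitively on the vertices.
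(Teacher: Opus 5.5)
Your proof is correct and follows essentially the paper's argument: fix one co-twin pair, use triangle-freeness to make $N(u)$ (and $N(v)$) independent, and use Lemma~\ref{lem:K2UK2} to locate the co-twins and the remaining edges, which yields $K_{k,k}$ minus a perfect matching. The only difference is that you get the cross edges by applying Lemma~\ref{lem:K2UK2} pairwise, whereas the paper counts degrees. Your version also makes explicit the independence of $N(v)$, which the paper leaves to its figure.
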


\begin{proof}
   As noted earlier,  $\overline{K_k \times K_2}$, $k \ge 3$ is vertex-transitive, twin-free, with 
   co-twins. As a subgraph of $K_{kk}$, it is bipartite and hence triangle-free.

   Conversely, suppose $G$ is vertex-transitive, twin-free, with 
   co-twins that is triangle-free. By Lemma~\ref{lem:adjNonadjCoTwins}, $G$ has even order $2k$. All vertex-transitive graphs of orders $2$ and $4$ have twins, so $k \ge 3$. Let $\{u,v\}$ be a 
   co-twin pair and let $N(u) = \{x_1, x_2, \dots, x_{k-1}\}$. 
   Because $G$ is  triangle-free, no two vertices in $N(u)$ are adjacent. Since $N[u] \cap N[v] = \emptyset$, no $x_j$ is adjacent to $v$. Thus each $x_j$ must have $k-2$ neighbors in $N(v)$. If we let $y_j$ denote the 
   co-twin of $x_j$, then $y_j \in N(v)$ by Lemma~\ref{lem:K2UK2}. These considerations imply that $G$ must be as shown in  Figure~\ref{fig:OnlyCrowns}, which is clearly is $\overline{K_k \times K_2}.$   
\end{proof}

\begin{figure}[h]
    \centering
    \includegraphics[width= 0.3\textwidth, center]{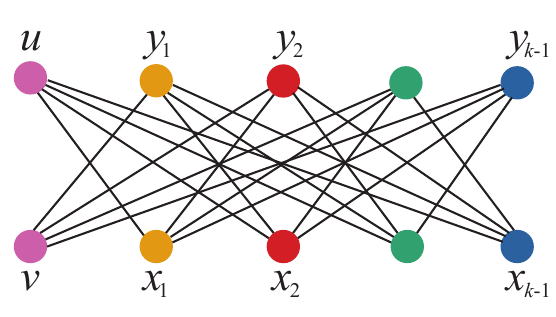}
    \caption{Vertex-transitive, twin-free, triangle-free with 
    co-twins.}
    \label{fig:OnlyCrowns}
\end{figure}

Since the automorphism group, determining number and distinguishing number of a graph are the same for a graph and its complement, we can combine results from Harary \cite{H1958}, Boutin \cite{B2009} and Imrich, Jerebic and Klavžar \cite{IJK2008} on direct products of complete graphs to obtain the following. 

\begin{prop}
For $k \ge 3$, $\aut(\overline{K_k \times K_2})  = S_k \times S_2$, $\det(\overline{K_k \times K_2}) = k-1$ and $\dist(\overline{K_k \times K_2})=d$ where $d \in \mathbb N$ satisfies  $(d-1)^2 \le k \le d^2 -1$.
\end{prop}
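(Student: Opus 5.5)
The statement has three parts, and I will treat each separately. The governing observation is that complementation preserves the automorphism group, the determining number and the distinguishing number: $\aut(G)=\aut(\overline G)$, so determining sets and distinguishing colorings transfer verbatim. It therefore suffices to compute all three invariants for whichever of $\overline{K_k\times K_2}$ and $K_k\times K_2$ is more convenient.

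\textbf{Automorphism group.} Here I would not even need the external results. The crown graph $\overline{K_k\times K_2}$ with $k\ge 3$ is vertex-transitive, twin-free, has co-twins, and is triangle-free by Proposition~\ref{prop:OnlyCrowns}. Corollary~\ref{cor:AutCoTwins} then gives $\aut(\overline{K_k\times K_2})=S_k\times S_2$ directly, in agreement with Harary's result on $\aut(K_k\times K_2)$.

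\textbf{Determining number.} I would work with the crown graph viewed as $K_{k,k}$ minus a perfect matching $\{u_i,v_i\}$, so that $\aut = S_k\times S_2$ acts by simultaneously permuting indices and possibly swapping sides. For the upper bound, take $D=\{u_1,\dots,u_{k-1}\}$. An automorphism fixing $D$ pointwise cannot swap sides, and its index permutation fixes $1,\dots,k-1$, hence also fixes $k$; so it is the identity. For the lower bound, let $D$ be any set of at most $k-2$ vertices. Then $D$ meets at most $k-2$ co-twin pairs, so there are two indices $i\ne j$ whose pairs are disjoint from $D$. The automorphism transposing $i$ and $j$ (with no side swap) fixes $D$ pointwise and is nontrivial. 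Hence $\det=k-1$; this also agrees with Boutin's result.

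\textbf{Distinguishing number.} This is the main obstacle, and I would import the Imrich--Jerebic--Klav\v{z}ar computation for $K_k\times K_2$ rather than redo it. For orientation, the counting goes as follows. A $d$-coloring assigns to each index $i$ an ordered pair $(c(u_i),c(v_i))\in[d]^2$. Breaking all index permutations forces these $k$ pairs to be distinct. Breaking the side swap composed with any index permutation additionally forbids the multiset of pairs from being invariant under reversal $(a,b)\mapsto(b,a)$. Using a diagonal pair $(a,a)$ only helps to the extent that asymmetric configurations exist; a careful count shows that $k$ distinct pairs with a non-reversal-invariant multiset exist iff $k\le d^2-1$. (For example, using all $d^2$ pairs gives a reversal-invariant set, so the bound is $d^2-1$.) Minimality of $d$ then gives $(d-1)^2\le k$.

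I would check this boundary case carefully against the cited paper, since the exact form of the inequality, and in particular its endpoints, is where an error is most likely.
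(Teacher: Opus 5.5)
Your argument is correct, but it takes a more self-contained route than the paper. The paper's entire proof is the remark that $\aut$, $\det$ and $\dist$ are unchanged under complementation, followed by citations of Harary, Boutin and Imrich--Jerebic--Klav\v{z}ar for the complement. (There the paper's $K_k\times K_2$ must be read as the Cartesian product, the prism over $K_k$, since the standard direct product $K_k\times K_2$ is already the crown graph.)

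You instead obtain $\aut(\overline{K_k\times K_2})\cong S_k\times S_2$ internally from Proposition~\ref{prop:OnlyCrowns} and Corollary~\ref{cor:AutCoTwins}. You then prove $\det=k-1$ by a direct pigeonhole argument on co-twin pairs. Both this step and your coloring count use the concrete form of every automorphism: an index permutation, with or without swapping sides. State explicitly that these $2\cdot k!$ maps are automorphisms, and that $|\aut|=2\cdot k!$ by Lemma~\ref{lem:kerKappa} and Proposition~\ref{prop:Surjective}, so they exhaust the group.

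For the distinguishing number, your sketch is already a complete proof, so you need not defer to the citation or worry about the endpoints. A $d$-coloring is distinguishing iff its $k$ row pairs are distinct and form a subset of $\{1,\dots,d\}^2$ that is not closed under reversal. For $d\ge 2$ and $k\le d^2-1$, take any $k$ pairs containing $(1,2)$ but not $(2,1)$. Conversely, $k\ge d^2$ forces either a repeated pair or the full set, which is reversal-closed. Thus $\dist(\overline{K_k\times K_2})=\lceil\sqrt{k+1}\,\rceil$, which is exactly the condition $(d-1)^2\le k\le d^2-1$. The vague sentence about diagonal pairs can simply be deleted.

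The paper's route buys brevity. Yours buys independence from three external results and from having to translate the paper's product notation into theirs.
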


Now suppose that $G$ has order $n=2k$ is vertex-transitive, twin-free, with 
co-twins, but has triangles. 
We do not have a result as definitive as Corollary~\ref{cor:AutCoTwins}, but we can nonetheless draw conclusions about the automorphsim group and symmetry parameters of $G$.

\begin{prop}\label{prop:StabAut}
   Let $G$  be a graph of order $n=2k$ that is vertex-transitive, twin-free, with 
   co-twins. 
   Let $u \in V(G)$.
   Let $u\in V(G)$; let $H_u$ be the subgraph of $G$ induced by $N(u)$ and $\stab(u)$ be the stabilizer of $u$ in $\aut(G)$. Then
   \begin{enumerate}
   \item $\stab(u) \cong \aut(H_u)$,
   \item $|\aut(G)| = |\aut(H_u)| \cdot n$,
   \item $\det(G) = 1 + \det(H_u)$,
   \item $\dist(G) \le \dist(H_u)$.
   \end{enumerate}
\end{prop}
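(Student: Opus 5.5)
The plan is to study the restriction map $\rho:\stab(u)\to \aut(H_u)$, $\rho(\alpha)=\alpha|_{N(u)}$, and show that it is an isomorphism. The other three items then follow quickly.

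First, $\rho$ is well defined: any $\alpha$ fixing $u$ maps $N(u)$ onto itself and preserves adjacency there. It is clearly a homomorphism.

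\textbf{Injectivity.} Suppose $\alpha\in\stab(u)$ fixes $N(u)$ pointwise. Let $v$ be the co-twin of $u$. Since $\alpha$ fixes $u$ and co-twins are unique by Lemma~\ref{lem:AtMostOneCoTwin}, $\alpha(v)=v$. Every other vertex lies in $N(v)$, and each $y\in N(v)$ is the co-twin of some $x\in N(u)$; this follows from Lemma~\ref{lem:K2UK2} and the fact that co-twin pairs partition $V(G)$. Since $\alpha(x)=x$, uniqueness of co-twins forces $\alpha(y)=y$. Hence $\alpha$ is the identity.

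\textbf{Surjectivity.} This is the main obstacle. Given $\phi\in\aut(H_u)$, I will extend it to $\alpha$ by setting $\alpha(u)=u$, $\alpha(v)=v$, $\alpha|_{N(u)}=\phi$, and $\alpha(y)=\overline{\phi(x)}$ for $y\in N(v)$, where $\bar z$ denotes the co-twin of $z$ and $x=\bar y$. This $\alpha$ is a bijection, and I must check that it preserves adjacency for each type of pair.
\begin{itemize}
\item Pairs inside $N(u)$ are handled by $\phi$.
\item Pairs meeting $\{u,v\}$ are preserved by construction.
\item For $x\in N(u)$ and $y\in N(v)$ with $y\ne\bar x$, Lemma~\ref{lem:K2UK2} applied to the pairs $\{x,\bar x\}$ and $\{y,\bar y\}$ gives
\[
x\sim y \iff x\not\sim \bar y ,
\]
where $\bar y\in N(u)$. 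Thus adjacency across the two halves is determined by non-adjacency inside $H_u$, which $\phi$ preserves. Pairs $x,\bar x$ are nonadjacent and map to co-twin pairs, so they are preserved as well.
\item For $y,y'\in N(v)$, the same lemma gives $y\sim y'$ iff $\bar y\sim\bar y'$, which again reduces to $H_u$.
\end{itemize}
So the whole adjacency structure is encoded by $H_u$ and co-twin pairing, and $\alpha\in\aut(G)$ with $\rho(\alpha)=\phi$. This proves item~1.

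\textbf{Item 2.} By vertex-transitivity and orbit–stabilizer, $|\aut(G)|=n\,|\stab(u)|=n\,|\aut(H_u)|$.

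\textbf{Item 3.} For the upper bound, take a minimum determining set $D$ of $H_u$. Then $\{u\}\cup D$ is determining for $G$: an automorphism fixing it lies in $\stab(u)$, and its restriction to $N(u)$ fixes $D$, so it is trivial by injectivity of $\rho$.

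For the lower bound, let $S$ be a minimum determining set of $G$. By vertex-transitivity we may assume $u\in S$. Replace each element of $S$ outside $N[u]$ by its co-twin, which lies in $N(u)$, except that $v$ is dropped since it is fixed once $u$ is. Fixing a vertex forces its co-twin to be fixed, so the result is still determining and no larger. Then $S\setminus\{u\}$ restricted to $N(u)$ is a determining set of $H_u$ via the isomorphism $\rho$. Hence
\[
\det(G)\ge 1+\det(H_u).
\]

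\textbf{Item 4.} Take a distinguishing $d$-coloring of $H_u$ with $d=\dist(H_u)$. Extend it by giving $u$ a color and giving each vertex of $N(v)\cup\{v\}$ colors chosen so that $u$ is the unique vertex with its color-pattern. The natural choice is to color $\bar x$ the same as $x$ and $v$ the same as $u$; then any color-preserving automorphism either fixes $u$, in which case it restricts to a color-preserving automorphism of $H_u$ and is trivial, or it is ruled out. The delicate part is the case where $u$ itself moves. If necessary I would handle it by recoloring $u$ and $v$ with a color chosen to make $u$'s neighborhood color multiset unique, using $d\ge2$. I expect this to be the subtle step and would check small cases ($d=1$) separately.
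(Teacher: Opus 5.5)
Your items 1--3 are correct and follow the paper's route: the restriction map, extending $\phi\in\aut(H_u)$ by mirroring through co-twins, and orbit--stabilizer. Your case check via Lemma~\ref{lem:K2UK2} and your co-twin replacement argument for $\det(G)\ge 1+\det(H_u)$ are more complete than the paper's sketch.

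The gap is item 4. Your \emph{natural choice} (color $\bar x$ like $x$, and $v$ like $u$) is never distinguishing. Under it every co-twin pair is monochromatic, so the co-twin swap $\beta$ of Lemma~\ref{lem:kerKappa}, which is an automorphism moving every vertex, preserves the coloring. So your claim that a color-preserving automorphism moving $u$ is ruled out is false for this coloring. The fallback does not repair this. Since $u,v\notin N(u)\cup N(v)$, recoloring $u$ and $v$ leaves the color multisets on $N(u)$ and $N(v)$ unchanged, and these coincide under the mirrored coloring. And if $u$ and $v$ receive the same color, $\beta$ still preserves everything.

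The missing idea is that automorphisms permute co-twin pairs. Keep every other pair monochromatic as you did, but give $u$ and $v$ two \emph{different} colors from the $d=\dist(H_u)$ available. Then $\{u,v\}$ is the unique bichromatic co-twin pair, so a color-preserving $\alpha$ maps $\{u,v\}$ to itself and, the colors being different, fixes $u$. Its restriction to $N(u)$ is a color-preserving automorphism of $H_u$, hence trivial, and injectivity of $\rho$ gives $\alpha=\mathrm{id}$. This is exactly the paper's argument, and it needs $d\ge 2$.

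The case $d=1$ is not a small case to check separately. Because $\beta\neq\mathrm{id}$, one always has $\dist(G)\ge 2$, so item 4 is false whenever $\aut(H_u)$ is trivial. It must be read under the hypothesis that $\aut(H_u)$ is nontrivial. The paper's opening remark that all parts clearly hold when $\aut(H_u)$ is trivial overlooks this for item 4.
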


\begin{proof} All parts of the theorem clearly hold if $\aut(H_u)$ is trivial, so we assume $|\aut(H_u)| > 1$.
    Let $\pi:\stab(u) \to \aut(H_u)$ be the canonical restriction map, which is clearly a group homomorphism. If $\alpha \in \ker(\pi)$, then $\alpha$ fixes every vertex in $N[u]$. Because graph automorphisms preserve 
    co-twin pairs, $\alpha$ must also fix the 
    co-twin of every vertex in $N[u]$. By Lemma~\ref{lem:K2UK2}, no 
    co-twin pair can be entirely inside $N[u]$. Thus $\alpha$ is the identity automorphism.

    To show that $\pi$ is surjective, let $\gamma \in \aut(H_u)$. We want to extend $\gamma$ to a bijection  $\sigma$ on $V(G)$. We start by setting $\sigma(x) = \gamma(x)$ for all $x \in N(u)$.
    If $v$ is the  co-twin of $u$, then we set $\sigma(u) = u$ and $\sigma(v) = v$.  
    By Lemma~\ref{lem:K2UK2}, the 
    co-twin of each $w \in N[v]$ is a vertex in $N[u]$, call it $z$. We let $\sigma(w)$ be the 
    co-twin of $\gamma(z)$. 
    In other words, the action of $\sigma$ on the vertices of $N[v]$ `mirrors' the action of $\gamma$  on the vertices of $N[u]$. It is straightforward to verify that $\sigma \in \stab(u) \le \aut(G)$ and that $\pi(\sigma) = \gamma$.  

    Since $G$ is vertex-transitive, the orbit of $u$ is $V(G)$. The second statement of the proposition follows from the orbit-stabilizer theorem (see \cite{GR2001}).

    Because $G$ is nontrivial and vertex-transitive, $\det(G) >1.$ If $u$ is fixed, then to eliminate any nontrivial automorphism that fixes $u$, we must also fix vertices in a determining set of $H_u$. 

    Finally, we can construct a distinguishing coloring of $G$ by first using a palette of $\dist(H_u)$ colors to create a distinguishing color on $H_u$. Next, color the co-twin of each $v \in N(u)$ the same color as $v$. Finally, color $u$ and its co-twin with two different colors from the palette used for $H_u$. Let $\alpha\in \aut(G)$ be an automorphism that respects this coloring. Because $\alpha$ preserve co-twin pairs and only one co-twin pair has been assigned different colors, $\alpha(u) = u$.
\end{proof}

\begin{ex} 
The icosahedral graph is 
vertex-transitive, twin-free, with  co-twins and  
has triangles. Because $12 = 2\cdot 6$ and $6$ is even, it is not a circulant graph by Corollary~\ref{cor:OrderTwinFreeCoTwins}. 
For any vertex $u$, $H_u = C_5$, so $\aut(H_u) = D_5$, $\det(H_u) = 2 $ and $\dist(H_u) = 3$. Proposition~\ref{prop:StabAut} confirms the known facts that its automorphism group has order $12 \cdot 10 = 120$ and that its determining number and distinguishing number are both $3$.  
\end{ex}

\begin{ex} We apply Proposition~\ref{prop:StabAut} to the circulant graphs  $C_{14}(\pm 1, \pm 2, \pm 3)$ and $C_{18}(\pm 2, \pm 3, \pm 4, \pm 8)$, with $u = 0$.  
    The corresponding induced subgraphs $H_0$ are shown in Figure~\ref{fig:H0s}. In the first case, 
    $\aut(H_0) = S_2$, 
    which implies that $\mathbb Z_{14}$ has index 2 in $\aut(C_{14}(\pm 1, \pm 2, \pm 3))$ and thus is normal. As discussed at the end of Section \ref{sec:Circulants}, this implies
$\aut(C_{14}(\pm 1, \pm 2, \pm 3)) = \mathbb Z_{14} \rtimes S_2 = D_{14}$. 
Since $\det(H_0) = 1$, $\dist(H_0)=2$ and $C_{14}(\pm 1, \pm 2, \pm 3)$ has nontrivial automorphisms,
\[
\det(C_{14}(\pm 1, \pm 2, \pm 3)) = \dist(C_{14}(\pm 1, \pm 2, \pm 3)) =2. 
\]
    In the second case, $H_0 = 2K_1 \cup K_{33}$, so $\aut(H_0) = S_2 \times \big ((S_3 \times S_3) \rtimes S_2\big)$, which has order $144$. Thus 
    $|\aut(C_{18}(\pm 2, \pm 3, \pm 4, \pm 8))| = 144 \cdot 18 = 2592.$
    Since $\det(H_0) = \det(K_{33}) + 1 =5$ and $\dist(H_0) =\dist(K_{33}) = 4$,     
    \[
   \det(C_{18}(\pm 2, \pm 3, \pm 4, \pm 8)) = 6 \text{ and } \dist(C_{18}(\pm 2, \pm 3, \pm 4, \pm 8)) \le 4. 
    \]
    \end{ex}

 \begin{figure}[h]
    \centering
    \includegraphics[width= 0.65\textwidth, center]{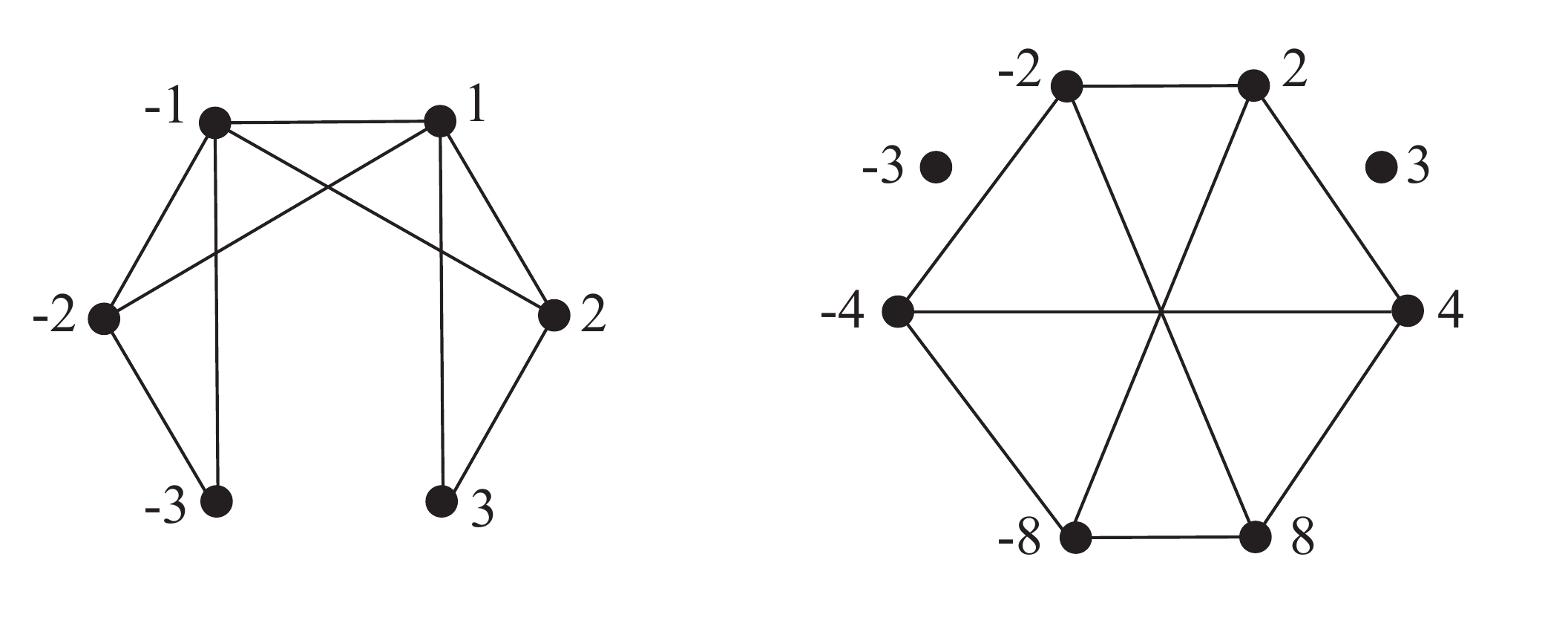}
    \caption{$H_0$ for $C_{14}(\pm 1, \pm 2, \pm 3)$ and $C_{18}(\pm 2, \pm 3, \pm 4, \pm 8)$.}
    \label{fig:H0s}
\end{figure}

\section{Future Work}

The results in this paper can be used to address the notoriously difficult problem of finding automorphism groups of circulant graphs by tracking the effects of the presence twins and co-twins. We have demonstrated that in some special cases, our results completely determine the automorphism group, determining number and distinguishing number. There is ample scope for future work in finding the automorphsim groups and symmetry parameters of circulant graphs which are both twin-free and co-twin-free.

\begin{appendix}
 \section{Proof of Proposition~\ref{prop:Twins3Gen} when $k < n/2$}

 We restate the proposition. 
 \begin{prop}\label{prop:Twins3Gen}
Let $0<i<j<k\le n/2$, with $\gcd(i,j,k,n) = 1$.
Then $C_n(i, j, k)$  has twins only 
shown in Table~\ref{table:ThreeGenTwins}. The twin classes of the non-complete graphs are the cosets of $\langle w \rangle$.

\begin{table}[h]
    \center
    \begin{tabular}{|c|c|c|} \hline
     & Nonadjacent twins & Adjacent twins\\
    \hline
    $k = n/2$ & $C_{10}(1,3,5) = K_{5,5}: w = 2$ & $C_6(1,2,3) = K_6$\\
    &&$i+j=k = \tfrac{n}{2}: w = \tfrac{n}{2}$ \\ \hline 
     & $C_{12}(1, 3, 5) = K_{6,6}: w = 2$ & $C_7(1,2,3) = K_7$ \\
    $k < n/2$ &   $i+j=\tfrac{n}{3}, 2i+j=k: w = \tfrac{n}{3}$ & \\
    &  $i+k = 2j = \tfrac{n}{2}: w = \tfrac{n}{2}$ &\\ \hline  
    \end{tabular}
    \caption{Connected three-generator circulant graphs with twins.\label{table:ThreeGenTwins}}    
\end{table}

\end{prop}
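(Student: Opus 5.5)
Since the $k = n/2$ case has already been handled in the body of the paper, this proof treats only $k<n/2$. Here $A=\{\pm i,\pm j,\pm k\}$ has exactly $6$ elements, and $|A\cup\{0\}|=7$. The argument rests on Proposition~\ref{prop:WhenNonadjTwins} and Corollary~\ref{cor:WhenAdjTwins}. Each case reduces to matching two increasing integer sequences: one listing the coset elements, the other $0<i<j<k<n-k<n-j<n-i<n$. This is the same device used for $k=n/2$.

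\textbf{Adjacent twins.} $|A\cup\{0\}|=7$ is prime, so $A\cup\{0\}$ must be a single coset of $\langle w\rangle$, and it is the trivial one since it contains $0$. Thus $w$ has order $7$ and $A\cup\{0\}=\langle n/7\rangle$. Matching the sequence $0<n/7<\cdots<6n/7$ with the one above gives $i=n/7$, $j=2n/7$, $k=3n/7$. Connectivity, $\gcd(i,j,k,n)=1$, then forces $n=7$, so the graph is $K_7$.

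\textbf{Nonadjacent twins.} By Proposition~\ref{prop:WhenNonadjTwins}, $A$ is a union of nontrivial cosets of $\langle w\rangle$, with $|\langle w\rangle|\in\{2,3,6\}$.

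\emph{Order $6$.} $A$ is a single coset $i+\langle n/6\rangle$. Matching gives $j=3i$, $k=5i$, $n=12i$. Connectivity forces $i=1$, so the graph is $C_{12}(1,3,5)=K_{6,6}$ with $w=2$. (Equivalently, the cosets of $\langle n/6 \rangle=\langle 2\rangle$ in $\mathbb Z_{12}$.)

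\emph{Order $3$.} $A$ is a union of two cosets of $\langle n/3\rangle$. These are swapped by negation, since a single coset closed under negation would have to contain an element of order at most $2$, and that cannot happen here. So $A=(i+\langle n/3\rangle)\cup(-i+\langle n/3\rangle)$. The smallest element of each coset lies in $(0,n/3)$. Sorting the six elements and matching them with $i<j<k<n-k<n-j<n-i$ yields $j=n/3-i$ and $k=n/3+i$. That is, $i+j=n/3$ and $k=2i+j$.

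\emph{Order $2$.} $w=n/2$, and $A$ is a union of three cosets $\{a,a+n/2\}$. Since $n/2\notin A$, each coset pairs two elements of $A$ lying on opposite sides of $n/2$. Using $0<i<j<k<n/2$, I would go through the possible pairings of $\{i,j,k\}$ with $\{n-i,n-j,n-k\}$. Each pairing $x+n/2=n-y$ means $x+y=n/2$, and inverse-closure must be respected. The consistent solution is $i+k=n/2$ together with $j+j=n/2$, i.e.\ $i+k=2j=n/2$. The other pairings either contradict the ordering or produce a coset containing an element of order $2$.

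\textbf{Finishing.} In each subcase, $w$ should be taken of maximal order, so overlaps between cases need to be checked. For instance, $K_{6,6}$ is also a union of cosets of $\langle 6\rangle$ and of $\langle 4\rangle$, but its twin classes are given by $w=2$, consistent with the table.

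The main obstacle is the bookkeeping in the order-$2$ and order-$3$ subcases. There I must rule out the remaining pairings cleanly, and confirm that these families are not already covered by a larger $w$. I would handle this by writing each coset's smallest representative in $(0,n/|w|)$ and comparing orderings.
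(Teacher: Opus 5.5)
Your proposal is correct and follows the paper's appendix proof essentially step for step. It uses the same split into $|\langle w\rangle|\in\{2,3,6\}$ for nonadjacent twins and a single order-$7$ trivial coset for adjacent twins, with each case resolved by matching sorted integer sequences against $0<i<j<k<n-k<n-j<n-i<n$. The order-$2$ bookkeeping you flag is immediate: $x\mapsto x+n/2$ is increasing on $(0,n/2)$, so it must send $i,j,k$ to $n-k,n-j,n-i$ in that order, giving $i+k=2j=n/2$ with no case analysis over pairings.
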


 \begin{proof} The proof in the case $k = n/2$ appears in the body of the paper.
   So assume $k < n/2$, which implies $|A|=6$. If $C_n(i,j,k)$ has nonadjacent twins, there are three cases to consider.
If $A$ is a single nontrivial coset of some $w \in \mathbb Z_n$ of order $6$, then matching the integer sequences
 \[0<i< i+\tfrac{n}{6}< i+\tfrac{2n}{6}< i+\tfrac{3n}{6}< i+\tfrac{4n}{6} < i+ \tfrac{5n}{6}<n \]
and 
\[0<i<j<k < n-k <n-j < n-i <n\]
gives
$n = 12i$, $j = 3i$ and $k=5i$; connectivity forces $i=1$. 
If $A$ is a union of two nontrivial cosets of some $w \in \mathbb Z_n$ of order $3$, then 
\[A = \{i, i+\tfrac{n}{3}, i+\tfrac{2n}{3}\} \cup \{\ell, \ell+\tfrac{n}{3}, \ell+\tfrac{2n}{3}\}\] for some $\ell \in \mathbb Z$ satisfying $i < \ell < n/3$. This implies that as integers, \[0<i<\ell < i+w < \ell + w < i+2w <\ell + 2w < n.\]  This forces $j = \ell$ and $k = i+w$. Since $A = -A$, $i + j = w$ and $2i+j = k$.
If $A$ is a union of three nontrivial cosets of $w \in \mathbb Z_n$ of order $2$, then 
\[A = \{i, i+\tfrac{n}{2}\}\cup \{j, j+\tfrac{n}{2}\} \cup \{k, k+\tfrac{n}{2}\}.\] As integers,
    \[0<i<j<k<i+\tfrac{n}{2}<j+\tfrac{n}{2}<k+\tfrac{n}{2} <n.\] Because $A$ is inverse-closed, $i+k = 2j = n/2$.

By Corollary~\ref{cor:WhenAdjTwins}, $C_n(i,j,k)$ has adjacent twins iff $A \cup\{0\}$ is an inverse-closed union of cosets. The only possibility is that $A\cup\{0\}$ is the trivial coset of an element $w$ of order $7$. The only such circulant graph that is connected is $C_7(1,2,3) = K_7$. 
\end{proof}
\end{appendix}

\bibliographystyle{abbrv}
\bibliography{bibliograph}
\end{document}